\setlist[enumerate,1]{font=\normalfont, label=(\roman*)}
\newtheorem{theorem}{Theorem}[section]
\newtheorem{lemma}[theorem]{Lemma}
\newtheorem{proposition}[theorem]{Proposition}
\theoremstyle{definition}
\newtheorem{definition}[theorem]{Definition}
\newtheorem{problem}[theorem]{Problem}
\newtheorem{remark}[theorem]{Remark}
\newcommand{\IC}{\mathbb{C}}
\newcommand{\IR}{\mathbb{R}}
\newcommand{\IZ}{\mathbb{Z}}
\newcommand{\IL}{\mathbb{L}}
\newcommand{\IQ}{\mathbb{Q}}
\DeclareMathOperator{\Hom}{Hom}
\title[Quantum Riemann-Hilbert problems for the resolved conifold]{Quantum Riemann-Hilbert problems for the resolved conifold}  
\author[Wu-yen Chuang]{Wu-yen Chuang}
\address{Department of Mathematics and TIMS, National Taiwan University, Taipei, Taiwan}
\email{wychuang@gmail.com}
\keywords{Riemann-Hilbert problems, Donaldson-Thomas invariants, multiple sine functions}
\subjclass[2010]{Primary: 14N35; Secondary: 35Q15}
\begin{document}

\begin{abstract} 
We study the quantum Riemann-Hilbert problems determined by the refined Donaldson-Thomas theory on the resolved conifold. 
Using the solutions to classical Riemann-Hilbert problems in \cite{Bri2} we give explicit solutions in terms of multiple sine functions with unequal parameters. The new feature of the solutions is that the valid region of the quantum parameter $q^{\frac{1}{2}}=\exp(\pi i \tau)$ varies on the space of stability conditions and BPS $t$-plane. 
Comparing the solutions with the partition function of refined Chern-Simons theory and invoking large $N$ string duality, we find that the solution contains the non-perturbative completion of the refined topological string on the resolved conifold. Therefore solving the quantum Riemann-Hilbert problems provides a possible non-perturbative definition for the Donaldson-Thomas theory.  
\end{abstract}

\maketitle

\section{Introduction}
A BPS structure was first introduced in \cite{Bri1} to describe the Donaldson-Thomas theory on a three-dimensional Calabi-Yau category with stability conditions. For a variation of BPS structures with a natural growth condition a class of Riemann-Hilbert problems was proposed. They involve finding piecewise holomorphic maps from the complex plane into an algebraic torus with prescribed discontinuities along given BPS rays. Later Bridgeland gave a detailed solution to the Riemann-Hilbert problems for the resolved conifold, using a class of special functions related to Barnes' multiple Gamma and sine functions \cite{Bri2}. 

In \cite{BBS} quantum Riemann-Hilbert problems were formulated in terms of refined Donaldson-Thomas invariants. These involve piecewise holomorphic maps from the complex plane to the group of automorphisms of a quantum torus algebra. 

On the other hand, there have appeared large amounts of literature studying the Donaldson-Thomas theory on the resolved conifold. Not intending to give a complete list, here we simply name a few of them: \cite{ChuJaf}\cite{NN}\cite{Sze} for the unrefined theory, and \cite{DimGuk}\cite{MMNS} for the refined theory.

Motivated by the recent progress in the (quantum) Riemann-Hilbert problems and the results about the Donaldson-Thomas theory on the resolved conifold, in this paper we study its quantum Riemann-Hilbert problems and give a solution under certain conditions on the quantum parameter.

The outline is as follows. In the section \ref{BPS}, we review the refined BPS structures arising from the refined Donaldson-Thomas theory, and discuss the case of the resolved conifold. 
In section \ref{qtorus} we introduce the quantum torus algebra and the quantum dilogarithm functions, needed for the BPS automorphism. 
In section \ref{qRHp} we formulate the quantum Riemann-Hilbert problem. In section \ref{sol} we first introduce the multiple sine functions, study their asymptotic expansions near $0$ and $\infty$, and then give a solution to the quantum Riemann-Hilbert problem for the resolved conifold. 

The new feature of the solutions is that the valid region of the quantum parameter $q^{\frac{1}{2}}=\exp(\pi i \tau)$ varies on the space of stability conditions and BPS $t$-plane. 
It is not clear to us whether the restriction of $\tau$ is due to the limitation of our approach or it has any physical implication. 

Finally we compare the solutions with the partition function of refined Chern-Simons theory, and find that the solutions contain the non-perturbative completion of the refined topological string/Gromov-Witten theory on the resolved conifold after invoking the large $N$ duality in string theory. Therefore solving the quantum Riemann-Hilbert problems provides a possible non-perturbative definition for the Donaldson-Thomas theory.

\medskip

\noindent{\bf Acknowledgement.} We would like to thank the referee whose comments helped improve greatly the presentation of the paper. WYC was partially supported by Taiwan MOST grant 109-2115-M-002-007-MY2 and NTU Core Consortiums grant 110L892103 and 111L891503 (TIMS).

\section{Refined BPS structures}\label{BPS}

\subsection{Definition}
In \cite{Bri1} the numerical Donaldson-Thomas theory was used to give the definition of a BPS structure, which is a special case of Kontsevich and Soibelman's notion of a stability structure. The following definition is the natural analogue for the refined Donaldson-Thomas theory, which first appeared in \cite{BBS}.

\begin{definition}
A refined BPS structure $(\Gamma, Z, \Omega)$ consists of the data
\begin{enumerate}
	\item A finite rank free abelian group $\Gamma \simeq \mathbb{Z}^{\oplus n}$, equipped with a skew-symmetric form $$\langle -,- \rangle : \Gamma \times \Gamma \to \IZ;$$
	\item A homomorphism of abelian group, called central charge,
	$$Z: \Gamma \to \IC;$$
	\item A map of sets 
	\begin{equation}
		\Omega: \Gamma \to \IQ[\IL^{\pm \frac{1}{2}}], \ \ \ \Omega(\gamma)=\sum_{n \in \IZ} \Omega_n(\gamma) \IL^{\frac{n}{2}},
	\end{equation} where $\IL^{\frac{1}{2}}$ is a formal symbol, satisfying the following two conditions:\\
	(a) Symmetry: $\Omega(-\gamma)=\Omega(\gamma)$ for all $\gamma \in \Gamma$, and $\Omega(0)=0;$\\
	(b) Support property: fixing a norm $|| \cdot ||$ on the finite dimensional vector space $\Gamma \otimes_{\IZ} \IR$, there is a constant $C>0$ such that
	$|Z(\gamma)|> C||\gamma||$ for $\gamma\in\Gamma$ with $\Omega(\gamma) \neq 0$.
\end{enumerate}
\end{definition}

The unrefined limit is recovered by taking $\IL^{\frac{1}{2}}=-1$ and $\Omega: \Gamma \to \IQ$. The support property is listed only for completeness and is satisfied in the resolved conifold case. 

\begin{definition}
The active BPS rays $\ell \subset \IC^{\ast}$ of the refined BPS structures are defined to be the rays $\ell=\IR_{>0} \cdot Z(\gamma)$ for $\gamma \in \Gamma$ with $\Omega(\gamma) \neq0.$	
\end{definition}

\subsection{Refined BPS structure for the resolved conifold}
Let $X$ be the resolved conifold, which is the total space of $\mathcal{O}_{\mathbb{P}^1}(-1)^{\oplus 2}$ over $\mathbb{P}^1$ and also the resolution of the ordinary double point singularity 
\begin{equation} 
	(x_1 x_2 -x_3 x_4 = 0) \subset \IC^4 .
\end{equation}

Let $\text{D}^b \text{Coh}(X)$ be the bounded derived category of coherent sheaves on $X$ and $\mathcal{D} \subset \text{D}^b \text{Coh}(X)$ be the full triangulated subcategory, consisting of complexes whose cohomology sheaves have support dimension $\leq 1$. The quantum Riemann-Hilbert problems we consider in this paper arise from the refined Donaldson-Thomas theory of the category $\mathcal{D}$. 
By \cite[Theorem A.2]{Bri2} the space of stability conditions on the derived category $\mathcal{D}$, quotiented by the subgroup of autoequivalences generated by spherical twists, is given by 
\begin{equation} 
	M_{stab} = \{ (v,w) \in \IC^2 \ |\ w \neq 0, v+nw \neq 0\ \text{for\ all}\ n \in \IZ \} \subset \IC^2 \ .
\end{equation}

We decompose $$M_{stab}=M_+\sqcup M_0\sqcup M_-$$ by the sign of $\text{Im}(v/w)$. In this paper we only study the non-degenerate region $M_+$, since the study of the regions $M_0$ and $M_-$ is completely analogous. 

Consider the Chern characters of compactly-supported sheaves $\mathcal{O}_C(n)$, $\mathcal{O}_x$ on $X$
and express them as
\begin{equation} 
	\text{ch}(\mathcal{O}_C(n))=\beta - n\delta, \ \text{ch}(\mathcal{O}_x)= -\delta\ .
\end{equation}

Given a point $(v,w) \in M_{stab}$ we have the following refined BPS structure.
\begin{enumerate}
	\item The charge lattice $\Gamma_{\leq 1} = \IZ \beta \oplus \IZ \delta$, with the zero skew-symmetric intersection form $\langle -,- \rangle =0$.
	\item The central charge is given by $Z_{\leq 1}(a\beta+b\delta)=2 \pi i (av+bw)\ .$
	\item The nonzero refined BPS invariants are 
\begin{equation}
		\Omega(\gamma)=\begin{cases} 1 &\text{if }\gamma=\pm \beta+n\delta \text{ for some } n\in \IZ,\\
			\IL^{\frac{1}{2}}+\IL^{-\frac{1}{2}} &\text{if }\gamma=k\delta\text{ for some }k\in \IZ\setminus\{0\}.\end{cases}
\end{equation}
\end{enumerate}	

The refined BPS invariants here can be read off from the Donaldson-Thomas partition function in \cite{DimGuk}\cite{MMNS}.
 

In order to have a nontrivial quantum Riemann-Hilbert problem one needs to consider the doubled lattice
$$ \Gamma = \Gamma_{\leq 1} \oplus \Gamma_{\geq 2}, \ 
\Gamma_{\geq 2} := \Gamma_{\leq 1}^\vee=\text{Hom}_{\IZ}(\Gamma_{\leq 1}, \IZ). $$ 
In terms of dual basis $\beta^{\vee}$ and $\delta^{\vee}$, the lattice $\Gamma_{\geq 2}$ is given by,
\begin{equation}
\Gamma_{\geq 2} = \IZ \beta^\vee \oplus \IZ \delta^\vee \ .	
\end{equation} 
The skew-symmetric form $\langle -,- \rangle $ on $\Gamma$ is then defined by $\langle -,- \rangle \vert_{\Gamma_{\leq 1}} = \langle -,- \rangle \vert_{\Gamma_{\geq 2}}=0$
and $\langle \beta^{\vee}, \beta \rangle =\langle \delta^{\vee}, \delta \rangle =1$. 
Later we also write $\Gamma_e=\Gamma_{\leq 1}$ and $\Gamma_m=\Gamma_{\geq 2}$.
We extend both the maps $Z$ and $\Omega$ by zero on $\Gamma_{\geq 2}$.

\section{Quantum torus algebra and quantum dilogarithm}\label{qtorus}

\subsection{Definition of quantum torus algebra}
We define the quantum torus algebra to be 
\begin{equation}
\IC_q[\mathbb{T}] = \oplus_{\gamma\in\Gamma}\, \IC[\IL^{\pm\frac{1}{2}}] \ x_{\gamma}, \qquad x_{\gamma_1} \ast x_{\gamma_2} = \IL^{\frac{1}{2}\langle \gamma_1, \gamma_2 \rangle}\, x_{\gamma_1 + \gamma_2}.
\end{equation}

This is a quantization of the ring of functions on the algebraic torus $\mathbb{T}=\text{Hom}_{\IZ}(\Gamma, \IC^{\ast})$.

A quadratic refinement of the form $\langle-,-\rangle$ is an element of the finite set 
\begin{equation}
	\{ \sigma : \Gamma \to \{ \pm 1\} \ | \ \sigma(\gamma_1+\gamma_2) = 
	(-1)^{\langle\gamma_1,\gamma_2\rangle} \ \sigma(\gamma_1) \sigma(\gamma_2) \} \ .
\end{equation}

Using a quadratic refinement $\sigma: \Gamma \to \{\pm 1\}$ one can alternatively use $y_{\gamma} = \sigma(\gamma) x_{\gamma}$ as generators for $\IC_q[\mathbb{T}]$. Let $q^{\frac{1}{2}}= -\IL^{\frac{1}{2}}$. Now we have 
\begin{equation}
\IC_q[\mathbb{T}] = \oplus_{\gamma\in\Gamma}\, \IC[q^{\pm\frac{1}{2}}] \ y_{\gamma},\qquad
 y_{\gamma_1} \ast y_{\gamma_2} = q^{\frac{1}{2}\langle \gamma_1, \gamma_2 \rangle}\, y_{\gamma_1 + \gamma_2}.
\end{equation}

\subsection{Doubled and uncoupled case}
\begin{definition} 
A refined BPS structure $\big(\Gamma=\Gamma_e\oplus\Gamma_m, Z, \Omega\big)$ satisfying $\Gamma_m=\Gamma_e^\vee$, $\langle -,- \rangle|_{\Gamma_e} = \langle -,- \rangle|_{\Gamma_m} =0$ and $\Omega(\gamma_m)=0$ for all $\gamma_m \in \Gamma_m$ is said to be {\it doubled and uncoupled.}
\end{definition}

\begin{remark}
The refined BPS structure on the resolved conifold is doubled and uncoupled.
\end{remark}

Now we introduce the extended quantum torus algebra to deal with the doubled case. We define the extended quantum torus algebra to be the noncommutative algebra
\begin{equation}
	\widehat{\IC_q[\mathbb{T}]} = \oplus_{\gamma_m \in \Gamma_m}\, \mathcal{M}(\mathcal{H} \times V_e)\cdot y_{\gamma_m},   
\end{equation}
where $\mathcal{H}$ is the upper half-plane in the complex plane $\IC$, $V_e = \text{Hom}_{\IZ}(\Gamma_e, \IC)$, and $\mathcal{M}(\mathcal{H} \times V_e)$ is the field of meromorphic functions on the product space. 

The product $\widehat{\ast}$ is defined by 
\begin{equation}
	\big( f_1(\tau,\theta)\cdot y_{\gamma_{m1}} \big) \widehat{\ast} \big( f_2(\tau,\theta)\cdot y_{\gamma_{m2}} \big) = f_1(\tau,\theta-\langle \gamma_{m2}, - \rangle \tau/2)f_2(\tau,\theta + \langle \gamma_{m1}, - \rangle \tau/2) \cdot y_{\gamma_{m1}+\gamma_{m2}},
\end{equation} where $\tau \in \mathcal{H}$ and $\theta$ is an element in $V_e$. Notice that the definition of $\widehat{*}$ here is slightly different from the one in \cite{BBS}. The reason for this choice of $\widehat{*}$ is purely computational, since it gives manageable automorphisms associated with the active BPS rays. 

There is a commutative subalgebra
\begin{equation}
\widehat{\IC_q[\mathbb{T}]}_0 = \mathcal{M}(\mathcal{H} \times V_e) \cdot 1 \subset \widehat{\IC_q[\mathbb{T}]} \ .
\end{equation}


\begin{lemma}\label{injI}
	If the refined BPS structure is doubled and uncoupled, the map $I:\IC_q[\mathbb{T}] \to \widehat{\IC_q[\mathbb{T}]}$ defined by 
\begin{equation}
I(q^{\frac{k}{2}} \cdot y_{\gamma_e+\gamma_m}) = {\rm exp} (\pi i \tau k + 2\pi i \theta(\gamma_e)) \cdot y_{\gamma_m}
\end{equation} is an injective ring homomorphism. 
\end{lemma}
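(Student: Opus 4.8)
The plan is to verify directly that the prescribed formula respects the two algebra structures and is injective on a natural basis. First I would check that $I$ is well defined: a general element of $\IC_q[\mathbb{T}]$ is a $\IC[q^{\pm\frac12}]$-linear combination of the $y_\gamma$, and writing $\gamma=\gamma_e+\gamma_m$ uniquely (since $\Gamma=\Gamma_e\oplus\Gamma_m$) and $q^{1/2}=\exp(\pi i\tau)$, the image ${\rm exp}(\pi i\tau k+2\pi i\theta(\gamma_e))\cdot y_{\gamma_m}$ lies in $\mathcal{M}(\mathcal{H}\times V_e)\cdot y_{\gamma_m}\subset\widehat{\IC_q[\mathbb{T}]}$, because $\tau\mapsto\exp(\pi i\tau k)$ is holomorphic on $\mathcal{H}$ and $\theta\mapsto\exp(2\pi i\theta(\gamma_e))$ is holomorphic (hence meromorphic) on $V_e$. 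By $\IC[q^{\pm\frac12}]$-linearity of the defining formula, $I$ extends uniquely to an additive, $\IC[q^{\pm\frac12}]$-linear map.

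Next, the multiplicativity. It suffices to check $I(y_{\gamma}\ast y_{\gamma'})=I(y_\gamma)\,\widehat{\ast}\,I(y_{\gamma'})$ on generators, since both sides are $\IC[q^{\pm\frac12}]$-bilinear. Write $\gamma=\gamma_e+\gamma_m$, $\gamma'=\gamma'_e+\gamma'_m$. On the left, $y_\gamma\ast y_{\gamma'}=q^{\frac12\langle\gamma,\gamma'\rangle}y_{\gamma+\gamma'}$; using $\langle-,-\rangle|_{\Gamma_e}=\langle-,-\rangle|_{\Gamma_m}=0$ and that $\langle-,-\rangle$ pairs $\Gamma_e$ with $\Gamma_m$ via the canonical form, one has $\langle\gamma,\gamma'\rangle=\langle\gamma_m,\gamma'_e\rangle+\langle\gamma_e,\gamma'_m\rangle=-\langle\gamma'_m,\gamma_e\rangle+\langle\gamma_e,\gamma'_m\rangle$, so $I(y_\gamma\ast y_{\gamma'})={\rm exp}\big(\pi i\tau(\langle\gamma_e,\gamma'_m\rangle-\langle\gamma'_m,\gamma_e\rangle)+2\pi i\theta(\gamma_e+\gamma'_e)\big)\cdot y_{\gamma_m+\gamma'_m}$. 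On the right, by the definition of $\widehat{\ast}$ the factor $f_1$ is evaluated at $\theta-\langle\gamma'_m,-\rangle\tau/2$ and $f_2$ at $\theta+\langle\gamma_m,-\rangle\tau/2$; substituting $f_1(\tau,\theta)={\rm exp}(2\pi i\theta(\gamma_e))$ (absorbing the $q$-power factor, which is $\theta$-independent) and $f_2(\tau,\theta)={\rm exp}(2\pi i\theta(\gamma'_e))$ and multiplying out the exponentials, the shift terms combine into exactly $\pi i\tau(\langle\gamma_e,\gamma'_m\rangle-\langle\gamma'_m,\gamma_e\rangle)$ — here one uses $\langle\gamma'_m,\gamma_e\rangle\tau/2$ evaluated against $\gamma_e$ and the analogous term — matching the left side. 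This is the one genuinely computational step, but it is a short symmetric bookkeeping of two sign-opposite contributions and I do not expect any obstruction beyond getting the $\tau/2$ conventions consistent with the paper's (slightly nonstandard) definition of $\widehat{\ast}$.

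For injectivity, suppose $\sum_i p_i(q^{1/2})\,y_{\gamma_i}$ maps to zero, with the $\gamma_i=\gamma_{e,i}+\gamma_{m,i}$ distinct. Grouping by the value of $\gamma_{m,i}\in\Gamma_m$ and using that the $y_{\gamma_m}$ are independent over $\mathcal{M}(\mathcal{H}\times V_e)$ in $\widehat{\IC_q[\mathbb{T}]}$, it reduces to showing that for fixed $\gamma_m$ the finitely many functions $\tau\mapsto\exp(\pi i\tau k_{ij})$, $\theta\mapsto\exp(2\pi i\theta(\gamma_{e,i}))$ are linearly independent over $\IC$ in $\mathcal{M}(\mathcal{H}\times V_e)$ whenever the pairs $(k_{ij},\gamma_{e,i})$ are distinct; this is the standard linear independence of distinct exponential characters, applied to the additive group $\IZ\times\Gamma_e$ acting on $\mathcal{H}\times V_e$. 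Hence all coefficients vanish. I would close by remarking that $I$ is not surjective — its image is the subalgebra generated by the $y_{\gamma_m}$ and the exponentials $\exp(2\pi i\theta(\gamma_e))$ — which is why one passes to the completed/meromorphic setting in the first place, but surjectivity is not claimed.
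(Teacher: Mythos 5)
Your proposal is correct and follows essentially the same route as the paper, whose proof simply states that the homomorphism property is easily verified and that injectivity follows from the linear independence over $\IC$ of the characters $\exp(2\pi i\theta(\gamma_e))$; you merely spell out the $\widehat{\ast}$ computation and the character-independence argument in detail. One transcription slip worth fixing: in the multiplicativity check the $\tau$-coefficient on both sides is $\langle\gamma_e,\gamma'_m\rangle+\langle\gamma_m,\gamma'_e\rangle$ (skew-symmetry gives $\langle\gamma_m,\gamma'_e\rangle=-\langle\gamma'_e,\gamma_m\rangle$, not $-\langle\gamma'_m,\gamma_e\rangle$ as you wrote), but the two sides still agree and the argument is unaffected.
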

\begin{proof}
It is easily verified that $I$ is a ring homomorphism. The injectivity of $I$ follows from the fact that the set $\big\{\text{exp}(2\pi i \theta(\gamma_e))\big\}_{\gamma_e\in\Gamma_e}$ is linearly independent over $\IC$.
\end{proof}

\subsection{Quantum dilogarithm}
The quantum dilogarithm function is given by
\begin{equation}\label{qdilog}
	\mathbb{E}_q(x) = \prod_{k \geq 0}(1-x q^k),
\end{equation} which converges absolutely for $|q| < 1$ and defines a nowhere-vanishing analytic function for $x \in \IC$. From the definition of the quantum dilogarithm function we know

\begin{equation}
	\mathbb{E}_q(x)\ \mathbb{E}_q(qx)^{-1}=1-x.
\end{equation} 

To each active BPS ray $\ell \subset \IC^*$ we attach a product 
\begin{equation}\label{BPSauto1}
	\text{DT}_q(\ell) = \prod_{Z(\gamma)\in \ell } \prod_{n\in \IZ}
	\mathbb{E}_q\big((-q^{\frac{1}{2}} )^{n+1}y_{\gamma}\big)^{-(-1)^n\Omega_n(\gamma)} ,
\end{equation}which could be an infinite product of quantum dilogarithm functions in general and is not an element in 
$\IC_q[\mathbb{T}]$ (see e.g. \cite{FGFS} for related discussions). 
Further assuming that the refined BPS structure is 
ray-finite, i.e. for any active ray $\ell$ there are only finitely many $\gamma \in \Gamma$ for which $Z(\gamma) \in \ell$ and $\Omega(\gamma) \neq 0$,
then the expression for $\text{DT}_q(\ell)$ (\ref{BPSauto1}) becomes a finite product of quantum dilogarithm functions.
In the region $\tau \in \mathcal{H}$, i.e. $|q| < 1$, each product factor in (\ref{BPSauto1}) is a nowhere-vanishing analytic function. So 
via the injective ring homomorphism $I$ in Lemma \ref{injI}, $\text{DT}_q(\ell)$ defines an element in  $\widehat{\IC_q[\mathbb{T}]}$,
\begin{equation}
I(\text{DT}_q(\ell)) \in \widehat{\IC_q[\mathbb{T}]}.
\end{equation}

Then the automorphism associated with the active BPS ray $\ell$ is defined to be 
\begin{equation}\label{BPSauto2}
\mathbb{S}_q(\ell)=\text{Ad}_{I(\text{DT}_q(\ell))} \in \text{Aut}\  \widehat{\IC_q[\mathbb{T}]} \ .
\end{equation} 


\begin{lemma}\label{autosql} Let $(\Gamma=\Gamma_e\oplus\Gamma_m, Z, \Omega)$ be a doubled and uncoupled refined BPS structure, which is also ray-finite. 
Let $\ell \subset \IC^*$ be an active BPS ray. Then the automorphism $\mathbb{S}_q(\ell)$ acts trivially on $I(y_{\gamma_e}) \in \widehat{\IC_q[\mathbb{T}]}$ for $\gamma_e \in \Gamma_e$ and the action of $\mathbb{S}_q(\ell)$ on $y_{\gamma_m}$ for $\gamma_m \in \Gamma_m$ is given by 
\begin{equation}\label{Sql}
\mathbb{S}_q(\ell)(y_{\gamma_m}) =\prod_{Z(\gamma)\in \ell}\  \prod_{n\in \IZ} \prod_{k=0}^{|\langle \gamma_m, \gamma \rangle | -1}
(1+(-q^{\frac{1}{2}})^n (q^{\frac{1}{2}})^{2k+1-|\langle \gamma_m, \gamma \rangle |} I(y_{\gamma}))^{(-1)^n\Omega_n(\gamma) \text{sgn}\langle \gamma,\gamma_m \rangle} \cdot y_{\gamma_m} \ ,
\end{equation} where $\text{sgn} \langle \gamma, \gamma_m \rangle$ is the sign of $\langle \gamma, \gamma_m \rangle$.
\end{lemma}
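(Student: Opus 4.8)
The plan is to exploit the very restrictive form that $\text{DT}_q(\ell)$ takes in the doubled and uncoupled case and then to run the computation after transporting everything to $\widehat{\IC_q[\mathbb{T}]}$ via the embedding $I$ of Lemma \ref{injI}, where it becomes essentially a computation with scalar (meromorphic) functions. Since $Z$ and $\Omega$ are supported on $\Gamma_e$, every charge $\gamma$ occurring in $\text{DT}_q(\ell)=\prod_{Z(\gamma)\in\ell}\prod_{n}\mathbb{E}_q\big((-q^{\frac{1}{2}})^{n+1}y_\gamma\big)^{-(-1)^n\Omega_n(\gamma)}$ lies in $\Gamma_e$, and since $\langle-,-\rangle$ vanishes on $\Gamma_e$ all the corresponding $y_\gamma$ commute with one another; hence $\text{DT}_q(\ell)$ is a product of pairwise commuting quantum dilogarithms lying in the commutative subalgebra generated by $\{y_\gamma:\gamma\in\Gamma_e\}$, and the (possibly infinite) product converges in the relevant degree completion by the support property. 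Under $I$ each $y_\gamma$ with $\gamma\in\Gamma_e$ becomes $\exp(2\pi i\theta(\gamma))\cdot 1\in\widehat{\IC_q[\mathbb{T}]}_0$ while $q^{\frac{1}{2}}=\exp(\pi i\tau)$, so $f:=I(\text{DT}_q(\ell))$ is a nowhere-vanishing holomorphic element of the commutative subalgebra $\widehat{\IC_q[\mathbb{T}]}_0$. The first assertion is then immediate: $I(y_{\gamma_e})$ lies in $\widehat{\IC_q[\mathbb{T}]}_0$, hence commutes with $f$ and is fixed by $\mathbb{S}_q(\ell)=\text{Ad}_f$ (equivalently, in $\IC_q[\mathbb{T}]$ the element $y_{\gamma_e}$ commutes with every $y_\gamma$, $\gamma\in\Gamma_e$, hence with $\text{DT}_q(\ell)$).

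For $\gamma_m\in\Gamma_m$ I would compute $\text{Ad}_f(y_{\gamma_m})$ straight from the definition of $\widehat{\ast}$. Since $f$ is a function times the identity, one obtains
\[
\text{Ad}_f(y_{\gamma_m}) \;=\; f\,\widehat{\ast}\,y_{\gamma_m}\,\widehat{\ast}\,f^{-1} \;=\; \frac{f\big(\tau,\,\theta-\langle\gamma_m,-\rangle\tau/2\big)}{f\big(\tau,\,\theta+\langle\gamma_m,-\rangle\tau/2\big)}\cdot y_{\gamma_m}.
\]
The two translations of $\theta$ multiply the argument $\exp(2\pi i\theta(\gamma))$ — the image of $y_\gamma$ — by $(q^{\frac{1}{2}})^{\mp\langle\gamma_m,\gamma\rangle}$, so the ratio factors, over the active $\gamma$ and all $n$, as
\[
\prod_{Z(\gamma)\in\ell}\prod_{n}\left(\frac{\mathbb{E}_q\big((-q^{\frac{1}{2}})^{n+1}(q^{\frac{1}{2}})^{-\langle\gamma_m,\gamma\rangle}y_\gamma\big)}{\mathbb{E}_q\big((-q^{\frac{1}{2}})^{n+1}(q^{\frac{1}{2}})^{\langle\gamma_m,\gamma\rangle}y_\gamma\big)}\right)^{-(-1)^n\Omega_n(\gamma)},
\]
a product of ratios of quantum dilogarithms whose numerator and denominator arguments differ by the factor $q^{\langle\gamma_m,\gamma\rangle}$.

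The third step is to collapse each of these ratios using the fundamental identity $\mathbb{E}_q(x)\,\mathbb{E}_q(qx)^{-1}=1-x$: iterating it gives $\mathbb{E}_q(x)/\mathbb{E}_q(q^m x)=\prod_{j=0}^{m-1}(1-q^j x)$ for $m\geq 1$ and the reciprocal identity for $m\leq -1$, so each ratio turns into a finite product of exactly $|\langle\gamma_m,\gamma\rangle|$ linear factors (and nothing when $\langle\gamma_m,\gamma\rangle=0$). Collecting the powers of $q^{\frac{1}{2}}$ that the shifts have produced, one checks that these factors are precisely $1+(-q^{\frac{1}{2}})^n(q^{\frac{1}{2}})^{2k+1-|\langle\gamma_m,\gamma\rangle|}y_\gamma$ for $k=0,\dots,|\langle\gamma_m,\gamma\rangle|-1$, while the sign of $\langle\gamma_m,\gamma\rangle$ determines the direction of the telescoping and hence whether they appear to the power $-(-1)^n\Omega_n(\gamma)$ or $+(-1)^n\Omega_n(\gamma)$, i.e.\ to the power $(-1)^n\Omega_n(\gamma)\,\text{sgn}\langle\gamma,\gamma_m\rangle$. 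Assembling everything over the active $\gamma$, $n$, $k$ yields exactly \eqref{Sql}.

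The only genuinely delicate point, and the one I expect to be the main obstacle, is the sign-and-exponent bookkeeping in these last two steps: one must track simultaneously the signs coming from $(-q^{\frac{1}{2}})^{n+1}$ together with the various $(-1)^n$'s, the fact that the telescoping is intrinsically ``one-sided'' whereas it is exactly the symmetric $\theta$-shift $\theta\mapsto\theta\mp\langle\gamma_m,-\rangle\tau/2$ built into $\widehat{\ast}$ that recentres the powers of $q^{\frac{1}{2}}$ (so that the exponent $2k+1-|\langle\gamma_m,\gamma\rangle|$ comes out symmetric about $n$), and the fact that the sign of $\langle\gamma_m,\gamma\rangle$ reverses the telescoping and so produces the factor $\text{sgn}\langle\gamma,\gamma_m\rangle$; had one instead computed naively in $\IC_q[\mathbb{T}]$, the same powers of $q^{\frac{1}{2}}$ would have had to be reconciled through the $\ast$-product with $y_{\gamma_m}$. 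One should also record at the outset that the support property guarantees that the infinite product defining $f$ converges to a nowhere-vanishing holomorphic function on $\mathcal{H}\times V_e$, so that the termwise manipulations — telescoping inside the product and commuting the $\theta$-shifts past it — are legitimate; everything else is formal.
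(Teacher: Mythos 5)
Your proposal is correct and follows essentially the same route as the paper: both pass through the embedding $I$ of Lemma \ref{injI} into $\widehat{\IC_q[\mathbb{T}]}$, use the $\widehat{\ast}$-product to convert conjugation by $\mathrm{DT}_q(\ell)$ into the symmetric shift $\theta \mapsto \theta \mp \langle\gamma_m,-\rangle\tau/2$, and then collapse the resulting ratio of quantum dilogarithms via $\mathbb{E}_q(x)\,\mathbb{E}_q(qx)^{-1}=1-x$ to obtain the finite product \eqref{Sql}. Your write-up is in fact somewhat more explicit than the paper's on the telescoping and sign bookkeeping, which the paper compresses into ``by the definition of the quantum dilogarithm.''
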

\begin{proof}
It is clear that $\mathbb{S}_q(\ell)$ acts trivially on $I(y_{\gamma_e}) \in \widehat{\IC_q[\mathbb{T}]}$ with $\gamma_e \in \Gamma_e$ due to the doubled and uncoupled structure.

For $y_{\gamma_m} \in \widehat{\IC_q[\mathbb{T}]}$ with $\gamma_m\in\Gamma_m$, we have
\begin{align}
&\mathbb{S}_q(\ell)(y_{\gamma_m})=\text{Ad}_{I(\text{DT}_q(\ell))}(y_{\gamma_m}) \nonumber \\
&=\prod_{Z(\gamma)\in \ell}\ \prod_{n\in \IZ} \mathbb{E}_q\big((-q^{\frac{1}{2}} )^{n+1}I(y_{\gamma})\big)^{-(-1)^n\Omega_n(\gamma)} 
\ \widehat{\ast}\  y_{\gamma_m}\ \widehat{\ast}\ \mathbb{E}_q\big((-q^{\frac{1}{2}} )^{n+1}I(y_{\gamma})\big)^{(-1)^n\Omega_n(\gamma)}\ . \nonumber \\
&= \prod_{Z(\gamma)\in \ell}\ \prod_{n\in \IZ} \mathbb{E}_{e^{2\pi i \tau}}\big((-e^{\pi i \tau} )^{n+1} e^{2\pi i \theta(\gamma)}\big)^{-(-1)^n\Omega_n(\gamma)} \ 
\widehat*\ y_{\gamma_m}\ \widehat*\ \mathbb{E}_q\big((-e^{\pi i \tau} )^{n+1}e^{2\pi i \theta(\gamma)}\big)^{(-1)^n\Omega_n(\gamma)} \nonumber \\
&= \prod_{Z(\gamma)\in \ell} \prod_{n\in \IZ} \mathbb{E}_{e^{2\pi i \tau}}\big((-e^{\pi i \tau})^{n+1}(e^{\pi i \tau})^{-\frac{\langle \gamma_m, \gamma \rangle}{2}} e^{2\pi i \theta(\gamma)}\big)^{-(-1)^n\Omega_n(\gamma)} \cdot \nonumber \\
&\qquad\qquad\qquad\qquad  \mathbb{E}_q\big((-e^{\pi i \tau} )^{n+1}(e^{\pi i \tau})^{\frac{\langle \gamma_m, \gamma \rangle}{2}}e^{2\pi i \theta(\gamma)}\big)^{(-1)^n\Omega_n(\gamma)} y_{\gamma_m}.
\end{align} 
Then using the definition of quantum dilogarithm (\ref{qdilog}) we obtain (\ref{Sql}).
\end{proof}


\begin{remark}
We may choose a quadratic refinement to absorb the signs in (\ref{Sql}). For the resolved conifold case, $\sigma$ is chosen such that $\sigma(\pm\beta)=-1, \sigma(\delta)=1$. This choice is consistent with \cite[Section 7.7]{GMN13}, in which they use $\sigma(\gamma_{hyper})=-1,\sigma(\gamma_{vector})=1$.

When $\gamma=\pm \beta + k \delta$, we only have $\Omega_0(\gamma) \neq 0$. When $\gamma=k \delta$, the nonvanishing invariants are
$\Omega_1(\gamma)$ and $\Omega_{-1}(\gamma)$.
In terms of the generators $x_{\gamma}$ with $\gamma\in\Gamma_e$ we have for the resolved conifold
\begin{equation}
\mathbb{S}_q(\ell)(x_{\gamma_m}) =\prod_{Z(\gamma)\in \ell}\  \prod_{n\in \IZ} \prod_{k=0}^{|\langle \gamma_m, \gamma \rangle | -1}
(1-(q^{\frac{1}{2}})^{n+2k+1-|\langle \gamma_m, \gamma \rangle |} I(x_{\gamma}))^{(-1)^n\Omega_n(\gamma) \text{sgn}\langle \gamma, \gamma_m \rangle} \cdot x_{\gamma_m} \ .    
\end{equation}
\end{remark}

\section{Quantum Riemann-Hilbert problems}\label{qRHp}

\subsection{Quantum Riemann-Hilbert problems}
Given a ray $\ell \in \IC^{\ast}$ the corresponding half-plane is given by
\begin{equation}
\mathcal{H}_{\ell} = \{ z\in \IC^{\ast} \ | \ z=uv \text{ with} \ u \in \ell \text{ and } \text{Re}(v) >0 \} \ .
\end{equation}

In the formulation of Riemann-Hilbert problems defined by the BPS structure of classical Donaldson-Thomas theory \cite[Problem 2.4]{Bri2}, we look for holomorphic function $\Phi_{\ell}:\mathcal{H}_{\ell} \to \mathbb{T}=\Hom_{\IZ}(\Gamma, \IC^*)$ and impose three conditions (RH1)(RH2)(RH3), such that all $\Phi_{\ell}$'s have 
jumping behaviors dictated by BPS automorphisms,
$0$ limit as $t \to 0$ and  polynomial growth at $t \to \infty$.

Motivated by these conditions, next we formulate the quantum Riemann-Hilbert problem for the doubled and uncoupled refined BPS structure.

\begin{problem}[Quantum Riemann-Hilbert problem]
	\label{qRHp1}
Let $(\Gamma=\Gamma_e\oplus\Gamma_m, Z, \Omega)$ be a doubled and uncoupled refined BPS structure, such that $\mathbb{S}_q(\ell)$ 
is well-defined for every active ray $\ell$.

For each non-active BPS ray $\ell \in \IC^*$ we look for maps 
\begin{equation}
\Phi_{\ell} : \mathcal{H}_{\ell} \to \text{Aut}\  \widehat{\IC_q[\mathbb{T}]} \ ,
\end{equation} satisfying the following three properties.

\begin{itemize} 
\item(qRH1) Suppose that two non-active rays $\ell_1, \ell_2 \in \IC^*$ in the clockwise order are the boundary rays of an acute sector $\Delta \subset \IC^*$, which contains finite active BPS rays $\tilde{\ell}_1, \cdots, \tilde{\ell}_n$. Then we have 
\begin{equation}
\Phi_{\ell_2}(t)= \Phi_{\ell_1}(t) \circ \mathbb{S}_q(\Delta) =  \Phi_{\ell_1}(t) \circ \mathbb{S}_q(\tilde{\ell}_1)\circ \cdots \circ \mathbb{S}_q(\tilde{\ell}_n),
\end{equation} 
for all $t \in \mathcal{H}_{\ell_1} \cap \mathcal{H}_{\ell_2}.$ \\

\item(qRH2) Let $\ell \in \IC^*$ be a non-active ray, whose closest active BPS rays in the anticlockwise and clockwise directions are $\ell_1$ and $\ell_2$ respectively. For each $\gamma = \gamma_e + \gamma_m \in \Gamma, \gamma_e \in \Gamma_e, \gamma_m \in \Gamma_m$ we have 
\begin{equation}\label{qRH2}
	\Phi_{\ell}(t) (I(x_{\gamma})) = \text{exp}\big(- \frac{Z(\gamma_e)}{t} + 2 \pi i \theta(\gamma_e)\big)\  R_{\ell,\gamma_m}(t,q^{\frac{1}{2}}) \cdot x_{\gamma_m} \in \widehat{\IC_q[\mathbb{T}]} ,
\end{equation}
where $q^{\frac{1}{2}}= \text{exp} (\pi i \tau)$ and $R_{\ell,\gamma_m}(t,q^{\frac{1}{2}})$ is a holomorphic function 
of $t \in \mathcal{H}_{\ell}$ and $\tau \in \mathcal{H}$, satisfying
$$ R_{\ell,\gamma_m}(t,q^{\frac{1}{2}}) \to 1,  $$ as $t\to 0$ in any closed subsector in $\mathcal{H}_{\ell_1} \cap \mathcal{H}_{\ell_2}$.

\item(qRH3) Let $\ell \in \IC^*$ be a non-active ray, whose closest active BPS rays in the anticlockwise and clockwise directions are $\ell_1$ and $\ell_2$ respectively. 
For each $\gamma_m \in \Gamma_m$ there exists $k>0$ such that 
\begin{equation}
|t|^{-k} < |R_{\ell,\gamma_m}(t,q^{\frac{1}{2}})| < |t|^k \ ,
\end{equation} for $t$ in any closed subsector of $\mathcal{H}_{\ell_1} \cap \mathcal{H}_{\ell_2}$ and $|t| \gg 0$.
\end{itemize}
\end{problem}

\begin{remark}
	In order to have a full-fledged quantum Riemann-Hilbert problem, in (qRH1) we
	also need to describe the BPS automorphism $\mathbb{S}_q(\Delta)$
	associated to the acute sector $\Delta$, containing infinitely many active 
	rays. The general discussion on the well-definedness of the classical BPS 
	automorphism $\mathbb{S}(\Delta)$ is provided in \cite[Appendix B]{Bri1}. 
	We leave the analogous analysis of the quantum counterpart 
	$\mathbb{S}_q(\Delta)$ for future study. 
	Although there exist such acute sectors $\Delta$ containing 
	infinitely many active rays on the resolved conifold, we will deal with 
	such cases by a more explicit approach later.  
	
	The $\text{exp}(-Z(\gamma_e)/t)$ part of the ansatz in (qRH2)  is motivated by \cite[Problem 2.4 (RH2)]{Bri2}.
	
	When solving $R_{\ell,\gamma_m}(t,q^{\frac{1}{2}})$ for all $\tau \in \mathcal{H}$ is not possible, we are allowed to relax the condition (qRH2) to restrict $\tau$ to an open set in $\mathcal{H}$.   
\end{remark}


\subsection{Quantum Riemann-Hilbert problem for the resolved conifold}\label{qRHpconifold}
Now back to the resolved conifold case. Let $X$ be the resolved conifold and $Z(a\beta+b\delta)=2 \pi i (av+bw)$ be the central charge function associated with the point $(v,w)\in M_+ \subset M_{stab}$. The active rays consist of 
\begin{equation}
\pm \ell_{\infty} = \pm \IR_{>0} 2 \pi i w, \ \pm \ell_n = \pm \IR_{>0} 2 \pi i (v+nw) \in \IC^* \ .
\end{equation} 
Define $\Sigma(n)$ to be the convex open sector with boundary rays $\ell_{n-1}$ and $\ell_n$.

Since the rays $\pm \ell_{\infty}$ contain infinitely many active classes, the refined BPS structure of the resolved conifold 
is not ray-finite. Moreover an acute sector $\Delta$ containing $\ell_{\infty}$ also contains infinitely many active rays. 
However, the classical BPS structure of the resolved conifold satisfies a convergent condition, i.e. $\sum_{\gamma \in \Gamma} |\Omega(\gamma)|\exp(-R|Z(\gamma)|) < \infty$ for some $R>0$, defined in \cite[Definition 2.1]{Bri2}. Then by the analysis in \cite[Appendix B]{Bri1} and \cite[Proposition 2.2]{Bri2}, it is
proved in \cite[Sec. 3.2]{Bri2} that the classical BPS automorphism $\mathbb{S}(\ell_{\infty})$ exists on the analytic open subset $|\mathtt{x}_{\delta}|<1$ 
of the twisted torus.
And $\mathbb{S}(\Delta)$ is also well-defined on the analytic open subset 
$|\mathtt{x}_{\delta}|<1$. Here $\mathtt{x}_{\delta}$ is the twisted character in the classical 
Riemann-Hilbert problem. 
 
At the quantum level, in terms of the quantum twisted characters $x_{\gamma}$,
the BPS automorphism of $\widehat{\IC_q[\mathbb{T}]}$ in Lemma \ref{autosql} gives
\begin{align}\label{sql123}
 &\mathbb{S}_q(\ell_n) (x_{\beta^\vee}) = (1-x_{\beta + n \delta})^{-1} x_{\beta^\vee}, \nonumber \\ 
 &\mathbb{S}_q(\ell_n) (x_{\delta^\vee}) = \prod_{k=0}^{n-1}(1-(q^{\frac{1}{2}})^{1-n+2k}x_{\beta + n \delta})^{-1} x_{\delta^\vee}, \nonumber \\
 &\mathbb{S}_q(\ell_{\infty})(x_{\beta^\vee}) = x_{\beta^\vee}.
\end{align} Here we omit to write the injective ring homomorphism $I$ 
in the formula and will keep doing so if there is no possible confusion.

As for  $\mathbb{S}_q(\ell_{\infty})(x_{\delta^\vee})$, Lemma \ref{autosql} does not directly apply since it involves an infinite product over the 
active classes.  Nonetheless we can still write down a formal expression,
\begin{align}\label{sql4}
 \mathbb{S}_q(\ell_{\infty})(x_{\delta^\vee}) = \prod_{m\geq 1} \prod_{k=0}^{m-1} \big( 
 (1-(q^{\frac{1}{2}})^{2-m+2k}x_{m \delta})
 (1-(q^{\frac{1}{2}})^{-m+2k}x_{m \delta})
 \big)x_{\delta^\vee},
\end{align} according to the formula in Lemma \ref{autosql}, which converges when $|x_{\delta}|<|q^{1/2}|<1$. 

As in \cite[Section 3.2]{Bri2} we also need to consider $\mathbb{S}_q(\Delta)$, 
where the sector $\Delta$ contains infinitely many active rays.
Without loss of generality we may take $\Delta$ to be just less than a half-plane 
and have boundary rays in $\Sigma(0)$ and $-\Sigma(0)$.
Then $\mathbb{S}_q(\Delta)$ is given by 
\begin{align}\label{Sql_list}
\mathbb{S}_q(\Delta)(x_{\gamma}) = & \prod_{n \geq 0}\prod_{k=0}^{ |\langle \beta+n\delta,\gamma \rangle |-1} (1-(q^{\frac{1}{2}})^{1-|\langle \beta+n\delta,\gamma \rangle|+2k} x_{\beta+n\delta})^{\text{sgn}\langle \beta+n\delta,\gamma \rangle} \nonumber \\
& \prod_{n \geq 1} \prod_{k=0}^{ |\langle -\beta+n\delta,\gamma \rangle |-1} (1-(q^{\frac{1}{2}})^{1-|\langle -\beta+n\delta,\gamma \rangle|+2k} x_{-\beta+n\delta})^{\text{sgn}\langle -\beta+n\delta,\gamma \rangle} \nonumber \\
& \prod_{m \geq 1} \prod_{k=0}^{m |\langle \delta,\gamma \rangle |-1} \big( 
(1-(q^{\frac{1}{2}})^{2-m|\langle \delta,\gamma \rangle |+2k}x_{m \delta})
(1-(q^{\frac{1}{2}})^{-m|\langle \delta,\gamma \rangle |+2k}x_{m \delta})
\big)^{-\text{sgn}\langle \delta,\gamma \rangle}\cdot x_{\gamma} \ .
\end{align} Again the expression converges when $|x_{\delta}|<|q^{1/2}|<1$.

Next we will adopt the following strategy to formulate the quantum 
Riemann-Hilbert problem for the resolved conifold: we take
Problem \ref{qRHp1} and (\ref{sql123})(\ref{sql4})(\ref{Sql_list}) 
as our starting point and verify the convergence in the resulting problem at last.   

Let $r_n$ be an non-active ray in the sector $\Sigma(n)$. It follows that a solution to the quantum Riemann-Hilbert problem for the resolved conifold is specified by the function $R_{r_n,\gamma_m}(t,q^{\frac{1}{2}})$ in (qRH2) with $\gamma_m = \beta^\vee, \delta^\vee$.
Therefore we define
\begin{equation}\label{BD}
R_{r_n,\beta^\vee}(t,q^{\frac{1}{2}}):=B_n(v,w,t,q^{\frac{1}{2}}), \ R_{r_n,\delta^\vee}(t,q^{\frac{1}{2}}):=D_n(v,w,t,q^{\frac{1}{2}})\ .
\end{equation} 

\begin{remark}
Consider the involution $\sigma: \widehat{\IC_q[\mathbb{T}]} \to \widehat{\IC_q[\mathbb{T}]}$ defined by $\sigma(f(q^{\frac{1}{2}}) x_{\gamma}) = f(q^{\frac{1}{2}}) x_{-\gamma}$. Due to the symmetry of the problem, $\Omega(\gamma)=\Omega(-\gamma)$, we have $$ \mathbb{S}_q(-\ell) \circ \sigma = \sigma \circ  \mathbb{S}_q(\ell). $$ Therefore if we have solved the quantum Riemann-Hilbert problem in one half-plane, we can fill out the whole $\IC^*$ by the symmetry. 
More precisely, at $\theta=0$ we extend the solution by setting
\begin{align}\label{ext}
	R_{-\ell,-\gamma_m}(-t,q^{\frac{1}{2}}) = & R_{\ell,\gamma_m}(t,q^{\frac{1}{2}}), \nonumber \\
	\Phi_{-\ell}(-t) \big(I(x_{-\gamma})\big) = & \text{exp}(-Z(-\gamma_e)/(-t))\  R_{\ell,\gamma_m}(t,q^{\frac{1}{2}}) \cdot x_{-\gamma_m} \in \widehat{\IC_q[\mathbb{T}]} \ .
\end{align}
We also have 
\begin{equation}\label{inv}
	R_{\ell,\gamma_m}(t,q^{\frac{1}{2}}) = R_{\ell,-\gamma_m}(t,q^{\frac{1}{2}})^{-1}, 
\end{equation} by $\Phi_{\ell}(t)\big(I(1)\big)=1 \in \widehat{\IC_q[\mathbb{T}]}$.
\end{remark}

Now we work out the conditions of Problem \ref{qRHp1} imposed on the function $B_n(v,w,t,q^{\frac{1}{2}})$ and $D_n(v,w,t,q^{\frac{1}{2}})$ in (\ref{BD}), with the BPS automorphisms $\mathbb{S}_q(\ell)$, $\mathbb{S}_q(\Delta)$ given by (\ref{sql123})(\ref{sql4})(\ref{Sql_list}). If the $(v,w)$ dependence is understood, we simply write $B_n(t,q^{\frac{1}{2}})=B_n(v,w,t,q^{\frac{1}{2}})$ and   $D_n(t,q^{\frac{1}{2}})=D_n(v,w,t,q^{\frac{1}{2}})$.

\begin{problem}[Quantum Riemann-Hilbert problem for the resolved conifold]
	\label{qRHconifold1}
	Fix $(v,w)\in M_+$. Define $x:=\exp(-2\pi iv/ t)$, and $y:=\exp(- 2\pi i w/t).$ For each $n\in\IZ$ find holomorphic functions $B_n(t,q^{\frac{1}{2}})$ and $D_n(t,q^{\frac{1}{2}})$ for $t$ in the region $$\mathcal{V}(n)=\mathcal{H}_{\ell_{n-1}}\cup\mathcal{H}_{\ell_{n}}, $$
	and $\tau \in \mathcal{H}$, satisfying the following properties.
	\begin{itemize}
		\item[(i)] As $t\to 0$ in any closed subsector of $\mathcal{V}(n)$ one has  
		\begin{equation}
		B_n(t,q^{\frac{1}{2}})\to  1, \qquad  D_n(t,q^{\frac{1}{2}})\to  1.
		\end{equation}
		\item[(ii)] For each $n\in\IZ$ there exists $k>0$ such that for  any closed subsector of $\mathcal{V}(n)$
		\begin{equation} 
			|t| ^{-k} <  |B_n(t,q^{\frac{1}{2}})|,  |D_n(t,q^{\frac{1}{2}})| < |t|^k, \qquad  |t| \gg 0.
		\end{equation}
		\item[(iii)] On the intersection $\mathcal{H}_{\ell_{n}} =\mathcal{V}(n)\cap \mathcal{V}(n+1)$ there are relations
		\begin{equation}\label{BDwallcrossing} B_{n+1}(t,q^{\frac{1}{2}})=B_{n}(t,q^{\frac{1}{2}})\cdot (1-x y^{n} )^{-1}, \ \ D_{n+1}(t,q^{\frac{1}{2}})=D_{n}(t,q^{\frac{1}{2}}) \cdot 
		\prod_{k=0}^{n-1} (1- (q^{\frac{1}{2}})^{1-n+2k}x y^{n})^{-1}.
		\end{equation}
		
		\item[(iv)] In the region $-i\cdot \Sigma(0)$ there are relations

		\begin{equation} \label{BBproduct}
			B_0(t,q^{\frac{1}{2}})\cdot B_0(-t,q^{\frac{1}{2}})=\prod_{n\geq 0}\big(1-x y^{n}\big)\cdot \prod_{n\geq 1}\big(1-x^{-1} y^{n})^{-1},
		\end{equation}

		\begin{align} \label{DDproduct}
		D_0(t,q^{\frac{1}{2}})\cdot D_0(-t,q^{\frac{1}{2}})= & \prod_{n\geq 1}\prod_{k=0}^{n-1}\big(1- (q^{\frac{1}{2}})^{1-n+2k} xy^{n}\big) \cdot \nonumber \\
		& \prod_{n\geq 1} \prod_{k=0}^{n-1}\big(1- (q^{\frac{1}{2}})^{1-n+2k} x^{-1}y^{n}\big)\cdot \nonumber \\
		&\prod_{n \geq 1} \prod_{k=0}^{n-1} \big( 
		(1-(q^{\frac{1}{2}})^{2-n+2k}y^n)
		(1-(q^{\frac{1}{2}})^{-n+2k}y^n)\big)^{-1}\ .
		\end{align}

	\end{itemize}
\end{problem}
\begin{remark}
	Although we have the $\theta(\gamma_e)$ dependence in (\ref{qRH2}) in (qRH2), for simplicity we set $\theta=0$ 
	when deriving these conditions for the resolved conifold, and also for the rest of the paper. The analysis for  $\theta \neq 0$ case is more involved but can be carried out in a completely analogous way. For example, when $\theta \neq 0$ the variables $x=\exp(-2\pi iv/ t)$ 
	and $y=\exp(- 2\pi i w/t)$ in Problem \ref{qRHconifold1} need to be 
	modified by the coordinates of $\theta$, and a new set of conditions as those of Problem \ref{qRHconifold1} can be generated.
		
	Problem \ref{qRHconifold1} (i)(ii) are derived from (qRH2) and (qRH3) respectively. Problem \ref{qRHconifold1} (iii) comes from the jumping condition (qRH1).
	Problem \ref{qRHconifold1} (iv) is obtained by composing all the $\mathbb{S}_q(l)$ in the sector $\Delta$ with boundary rays in $\Sigma(0)$ and $-\Sigma(0)$ and then using relations (\ref{Sql_list})(\ref{ext})(\ref{inv}).

	When solving $B_n(t,q^{\frac{1}{2}})$ and $D_n(t,q^{\frac{1}{2}})$ for all $\tau \in \mathcal{H}$ is not possible, we are allowed to restrict $\tau$ to an open set in $\mathcal{H}$.  

	As the last step we verify the convergence of the expressions with infinite products in Problem \ref{qRHconifold1} (iv). In the region $-i\cdot \Sigma(0)$, the real part of $2\pi i w/ t$ is positive. So we have $|y|<1$ and the RHS of (\ref{BBproduct}) converges. For the RHS of (\ref{DDproduct}) to converge, we need to impose $|y|<|q^{1/2}|<1$, which holds if $\text{Im}(\tau)$ is sufficiently small. 
\end{remark}

\section{A solution to quantum Riemann-Hilbert problem for the resolved conifold}\label{sol}
\subsection{Multiple sine functions}
In \cite{Bri2} the solution to the conifold Riemann-Hilbert problem was given in terms of double sine function, and triple sine function with two equal parameters, with certain exponential prefactors to fix the prescribed asymptotic behaviors. In the quantum case we will need triple sine functions with unequal parameters. It is worthwhile mentioning that the construction here is analogous to the process in which the solution in \cite{BBS} quantizes or deforms that in \cite{Barb}.

Multiple sine functions are defined using the multiple gamma function of Barnes \cite{Barn}. Let $z \in \IC$ and $\omega_1,\cdots,\omega_r\in\IC^*$. 
We have 
\begin{equation}
\text{sin}_r(z\,|\,\omega_1,\cdots,\omega_r)=\Gamma_r(z\,|\, \omega_1,\cdots,\omega_r)^{-1} \ \Gamma_r\big(\sum_{i=1}^{r}\omega_i-z\,|\,\omega_1,\cdots,\omega_r\big)^{(-1)^r} \ .
\end{equation}

For our purpose we will need three parameters $\omega_1, \widetilde\omega_1,\omega_2\in \IC^*$. Let $\overline\omega_1=(\omega_1+\widetilde\omega_1)/2$ and $\Delta\omega_1=(\omega_1-\widetilde\omega_1)/2$.

We define 
\begin{align}
\label{FG}
	F(z\,|\,\overline\omega_1,\omega_2)=\text{exp}\big(-\frac{\pi i}{2}B_{2,2}(z\,|\,\overline\omega_1,\omega_2 )\big)\ \text{sin}_2 (z\,|\,\overline\omega_1,\omega_2), \\
\label{FG2}
	G(z\,|\,\omega_1,\widetilde\omega_1,\omega_2)= \text{exp}\big(\frac{\pi i}{6}B_{3,3}(z+\overline{\omega}_1 \,|\,\omega_1,\widetilde{\omega_1},\omega_2 )\big)\ \text{sin}_3 (z+\overline{\omega}_1 \,|\,\omega_1,\widetilde{\omega_1},\omega_2), 
\end{align} where $B_{2,2}(z\,|\,\overline\omega_1,\omega_2 )$ and $B_{3,3}(z+\overline{\omega}_1 \,|\,\omega_1,\widetilde{\omega_1},\omega_2 )$ are the multiple Bernoulli polynomials, defined by the expansion. 

\begin{equation}
	\frac{s^r e^{zs}}{\prod_{i=1}^{r}(e^{\omega_i s} -1)} = \sum_{n=0}^{\infty} B_{n,r}(z\,|\,\omega_1,\cdots,\omega_r) \frac{s^n}{n!}.
\end{equation}

Here are some multiple Bernoulli polynomials, which will be needed later.
\begin{align}
	& B_{0,2}(z\,|\,\omega_1,\omega_2)=\frac{1}{\omega_1\omega_2}, \qquad B_{1,2}(z\,|\,\omega_1,\omega_2)=\frac{z}{\omega_1\omega_2} - \frac{\omega_1+\omega_2}{2\omega_1\omega_2}, \nonumber \\
	& B_{2,2}(z\,|\,\omega_1,\omega_2)=\frac{z^2}{\omega_1\omega_2} -\bigg(\frac{1}{\omega_1}+\frac{1}{\omega_2}\bigg)z+\frac{1}{6}\bigg(\frac{\omega_2}{\omega_1}+\frac{\omega_1}{\omega_2}\bigg)+\frac{1}{2}.
\end{align}

For later use, the Bernoulli polynomials are defined by

\begin{equation}
	\frac{s\, e^{zs}}{e^s-1}=\sum_{n=0}^{\infty} B_n(z) \frac{s^n}{n!}.
\end{equation}

The prefactors in (\ref{FG})(\ref{FG2}) are chosen such that $F$ and $G$ are admitted to have integral representations without the prefactors (see (\ref{fint})(\ref{gint}) below).

Define the following notation:
\begin{equation}
	\label{bore}
	\begin{aligned}
		x_1=\exp(2\pi i z/\overline\omega_1), &\quad x_2=\exp(2\pi i z/\omega_2), \\ q_1=\exp(2\pi i{\omega_2}/{\overline\omega_1}), &\quad q_2=\exp(2\pi i{\omega_1}/{\omega_2}) , \quad \widetilde{q}_2=\exp(2\pi i{\widetilde\omega_1}/{\omega_2}) .\end{aligned}
\end{equation}

The following two propositions are generalizations of  \cite[Proposition 4.1]{Bri2} and \cite[Proposition 4.2]{Bri2} to the case of $F$ and $G$ with unequal parameters.

\begin{proposition} 
	\label{F}
	The function $F(z\,|\,\overline\omega_1,\omega_2)$ is a single-valued meromorphic function of variables $z\in \IC$ and $\overline\omega_1,\omega_2\in \IC^*$ under the assumption that $\overline\omega_1$ and $\omega_2$ lie on the same side of some straight line through the origin. We have
	\begin{itemize}
		\item[(i)] The function is regular and nonvanishing
		except at the points
		\[z=a\overline\omega_1+b\omega_2, \quad a,b\in\IZ,\]
		which are zeroes if $a,b\leq 0$, poles if $a,b>0$, and otherwise neither.
		
		\item[(ii)] We have the following two difference relations for $F(z\,|\,\overline\omega_1,\omega_2)$:
		\begin{equation}\label{difF}
			\frac{F(z+\overline\omega_1\,|\,\overline\omega_1,\omega_2)}{F(z\,|\,\overline\omega_1,\omega_2)}=\frac{1}{1-x_2}, \qquad \frac{F(z+\omega_2\,|\,\overline\omega_1,\omega_2)}{F(z\,|\, \overline\omega_1,\omega_2)}=
			\frac{1}{1-x_1}.\end{equation}
		
		\item[(iii)] There is a product expansion
		\begin{equation} 
			F(z\,|\,\overline\omega_1,\omega_2)=\prod_{k\geq 1} (1-x_1 q_1^{-k})^{-1}\cdot \prod_{k\geq 0} (1-x_2 (q_2 \widetilde{q}_2)^{k/2}),
		\end{equation}	
		when  $\text{Im}(\overline\omega_1/\omega_2)>0$.

		\item[(iv)] When $\text{Re}(\overline\omega_1)>0$, $\text{Re}(\omega_2)>0$ and $0<\text{Re}(z)<\text{Re}(\overline\omega_1+\omega_2)$ there is an integral representation \begin{equation}\label{fint}
			F(z\,|\,\overline\omega_1,\omega_2)=\exp\Bigg(\int_C \frac{e^{zs}}{(e^{\overline\omega_1 s}-1)(e^{\omega_2 s}-1)}\frac{ds}{s}\Bigg),\end{equation}
		where the contour $C$ is along the real axis from $-\infty$ to $+\infty$ with a small semi-circle around the origin in the upper half-plane.
	\end{itemize}
\end{proposition}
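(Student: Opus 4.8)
The plan is to derive Proposition~\ref{F} from classical properties of Barnes' double gamma function $\Gamma_2(z\,|\,\omega_1,\omega_2)$ together with two elementary identities for the Bernoulli polynomial $B_{2,2}$, establishing (i) and (iv) first and then deducing (ii) and (iii) from (iv). For (i), write
\[
F(z\,|\,\overline\omega_1,\omega_2)=\exp\!\big(-\tfrac{\pi i}{2}B_{2,2}(z\,|\,\overline\omega_1,\omega_2)\big)\,\Gamma_2(z\,|\,\overline\omega_1,\omega_2)^{-1}\,\Gamma_2(\overline\omega_1+\omega_2-z\,|\,\overline\omega_1,\omega_2).
\]
The prefactor is a polynomial in $z$, holomorphic in $(\overline\omega_1,\omega_2)\in(\IC^*)^2$ in its exponent, hence entire and nowhere vanishing, so $F$ inherits meromorphy and its divisor from the two gamma factors. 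On the locus where $\overline\omega_1,\omega_2$ lie strictly on one side of a line through the origin, $\Gamma_2$ is single-valued, meromorphic and zero-free in $z$ with poles exactly along $z=a\overline\omega_1+b\omega_2$, $a,b\le 0$. Thus $\Gamma_2(z)^{-1}$ has zeros there, and $\Gamma_2(\overline\omega_1+\omega_2-z)$ has poles along $z=a\overline\omega_1+b\omega_2$, $a,b\ge 1$; these two lattices are disjoint for generic $\overline\omega_1/\omega_2$ (and in general by continuity in the parameters), which is exactly (i).

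For (iv), which is where the specific prefactor is used, I would start from Barnes' integral representation: for $\text{Re}(\overline\omega_1),\text{Re}(\omega_2)>0$ it writes $\log\Gamma_2(z\,|\,\overline\omega_1,\omega_2)$ as a degree-two polynomial in $z$ with coefficients built from $\overline\omega_1,\omega_2$, plus a convergent Mellin--Barnes integral along the real line indented at the origin. In the reflection combination $\log\Gamma_2(\overline\omega_1+\omega_2-z)-\log\Gamma_2(z)$, using the reflection symmetry $B_{2,2}(\overline\omega_1+\omega_2-z\,|\,\overline\omega_1,\omega_2)=B_{2,2}(z\,|\,\overline\omega_1,\omega_2)$, the polynomial part collapses to a single scalar multiple of $B_{2,2}(z\,|\,\overline\omega_1,\omega_2)$ --- precisely the term killed by $\exp(-\tfrac{\pi i}{2}B_{2,2})$ --- leaving the asserted formula $\log F=\int_C \frac{e^{zs}}{(e^{\overline\omega_1 s}-1)(e^{\omega_2 s}-1)}\,\frac{ds}{s}$. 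The growth of the integrand as $\text{Re}(s)\to+\infty$ and $\text{Re}(s)\to-\infty$ is governed by $e^{(\text{Re}(z)-\text{Re}(\overline\omega_1+\omega_2))\text{Re}(s)}$ and $e^{\text{Re}(z)\text{Re}(s)}$ respectively, so absolute convergence holds exactly on the stated range $0<\text{Re}(z)<\text{Re}(\overline\omega_1+\omega_2)$.

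With (iv) in hand, (iii) follows by closing the contour $C$ in the upper half-plane, assuming in addition $\text{Im}(\overline\omega_1/\omega_2)>0$ (compatible with the hypotheses of (iv)): the large arc contributes nothing by the same decay estimate, the triple pole at $s=0$ is excluded by the small indentation, and the enclosed poles are the simple ones at $s=2\pi i m/\overline\omega_1$ $(m\ge 1)$ and $s=2\pi i n/\omega_2$ $(n\ge 1)$. Summing $2\pi i$ times their residues, expanding $(q_1^{m}-1)^{-1}$ and $(\overline{q}_2^{\,n}-1)^{-1}$ as geometric series --- legitimate since $|q_1^{-1}|<1$ and $|\overline{q}_2|<1$ when $\text{Im}(\overline\omega_1/\omega_2)>0$, with $\overline{q}_2:=e^{2\pi i\overline\omega_1/\omega_2}=(q_2\widetilde{q}_2)^{1/2}$ --- and re-summing the resulting double series into logarithms gives $\log F=-\sum_{k\ge 1}\log(1-x_1 q_1^{-k})+\sum_{k\ge 0}\log(1-x_2(q_2\widetilde{q}_2)^{k/2})$, i.e. the claimed product; this extends to all $z\in\IC$ with $\text{Im}(\overline\omega_1/\omega_2)>0$ by analytic continuation since the product converges locally uniformly there. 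Part (ii) is then immediate from (iii): under $z\mapsto z+\overline\omega_1$ one has $x_1\mapsto x_1$ and $x_2\mapsto x_2\overline{q}_2$, so the first product is unchanged and the second telescopes to $(1-x_2)^{-1}$, and symmetrically $z\mapsto z+\omega_2$ gives $(1-x_1)^{-1}$; this holds where $\text{Im}(\overline\omega_1/\omega_2)>0$, hence everywhere by analytic continuation. (Independently of (iii) one could instead shift $z$ inside the integral of (iv), reducing the difference to $\int_C \frac{e^{zs}}{e^{\omega_2 s}-1}\,\frac{ds}{s}=-\log(1-x_2)$, or combine the standard shift relation $\sin_2(z+\overline\omega_1\,|\,\overline\omega_1,\omega_2)=\sin_2(z\,|\,\overline\omega_1,\omega_2)/\sin_1(z\,|\,\omega_2)$, with $\sin_1(z\,|\,\omega)=2\sin(\pi z/\omega)$, with the identity $B_{2,2}(z+\overline\omega_1\,|\,\overline\omega_1,\omega_2)-B_{2,2}(z\,|\,\overline\omega_1,\omega_2)=2z/\omega_2-1$ read off from the explicit expression for $B_{2,2}$.)

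The step I expect to be the main obstacle is (iv): verifying carefully that the prefactor $\exp(-\tfrac{\pi i}{2}B_{2,2})$ exactly annihilates the polynomial part of Barnes' integral formula in the reflection combination, together with the contour bookkeeping --- vanishing of the large arc, the precise effect of the indentation at $s=0$, and the admissible range of $z$ --- which also underpins the residue computation in (iii). The remaining parts reduce to classical facts about $\Gamma_2$ or to routine geometric-series and telescoping manipulations.
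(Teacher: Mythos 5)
Your proposal is correct in substance but takes a genuinely different route from the paper, because the paper does not prove Proposition~\ref{F} internally at all: it cites \cite[Appendix A]{JimMiw} for parts (i) and (ii), \cite[Corollary 6]{Naru} for (iii), and \cite[Proposition 2]{Naru} for (iv), remarking only that (ii) can alternatively be derived from the integral formula. What you outline is essentially a self-contained reconstruction of those references' arguments: (i) from the divisor of Barnes' $\Gamma_2$, (iv) from Barnes' integral representation of $\log\Gamma_2$ together with the reflection symmetry of $B_{2,2}$, (iii) from (iv) by residues, and (ii) from (iii) by telescoping (the paper would instead take (ii) from the classical shift relations of $\sin_2$ or directly from (iv)). Your residue computation is right, including $\overline q_2=(q_2\widetilde q_2)^{1/2}$ and the directions of the two geometric series forced by $\mathrm{Im}(\overline\omega_1/\omega_2)>0$, and the telescoping for (ii) is clean. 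Three caveats keep this from being a complete proof as written. First, you flag but do not execute the central verification in (iv) that $\exp(-\tfrac{\pi i}{2}B_{2,2})$ exactly cancels the polynomial part of the reflection combination; this is precisely the content of \cite[Proposition 2]{Naru}, so the plan is sound, but (iv) is asserted rather than proved. Second, the disjointness of the zero and pole sets in (i) does not follow from genericity plus continuity; it follows directly from the half-plane hypothesis, since a linear functional positive on both $\overline\omega_1$ and $\omega_2$ is $\le 0$ on $a\overline\omega_1+b\omega_2$ when $a,b\le 0$ and strictly positive when $a,b\ge 1$. Third, the contour closing in (iii) needs a sequence of expanding contours threaded between the poles rather than a single large arc, since $e^{zs}$ does not decay in all directions of the upper half-plane. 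What your route buys is independence from the double-sine literature beyond Barnes' classical results; what the paper's route buys is brevity.
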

\begin{proof} The proposition is a generalization of \cite[Proposition 4.1]{Bri2}.
The general properties of the double sine functions and $F(z\,|\,\overline\omega_1,\omega_2)$, including part $(i)$ and $(ii)$, are proved in \cite[Appendix A]{JimMiw}. 

The product expansion $(iii)$ is proved in \cite[Corollary 6]{Naru}. The integral representation $(iv)$ is proved in \cite[Proposition 2]{Naru}. 

The difference relation $(ii)$ can also be proved using the integral formula $(iv)$.
\end{proof}

\begin{proposition}
	\label{G} The function $G(z\,|\,\omega_1,\widetilde{\omega}_1,\omega_2)$ is a single-valued holomorphic function of variables $z\in\IC$ and $\omega_1,\widetilde{\omega}_1,\omega_2\in \IC^*$ under the assumption that $\omega_1$, $\widetilde\omega_1$, and $\omega_2$ all lie on the same side of some straight line through the origin. 
	We have the following properties:
	\begin{itemize}
		\item[(i)] The function is entire and vanishes only at the points
		\[z=a\omega_1+b\widetilde\omega_1+c\omega_2, \quad a,b,c\in\IZ,\]
		with  $a,b,c\leq 0$, or $a,b,c> 0$.
			
		\item[(ii)] We have the difference relations
		\begin{align}
			\label{difG1}\frac{G(z+\omega_1\,|\,\omega_1,\widetilde\omega_1,\omega_2)}{G(z\,|\,\omega_1,\widetilde\omega_1,\omega_2)}=&F(z+\overline\omega_1\,|\,\widetilde\omega_1,\omega_2)^{-1} , \\
			\label{difG2}\frac{G(z+\widetilde\omega_1\,|\,\omega_1,\widetilde\omega_1,\omega_2)}{G(z\,|\,\omega_1,\widetilde\omega_1,\omega_2)}=&F(z+\overline\omega_1\,|\,\omega_1,\omega_2)^{-1}.
		\end{align}		
		
		\item[(iii)] When $\text{Re}(\omega_1),\text{Re}(\widetilde\omega_1),\text{Re}(\omega_2)>0$ and $-\text{Re}(\overline\omega_1)< \text{Re}(z)<\text{Re}(\overline\omega_1+\omega_2)$ there is an integral representation \begin{equation}\label{gint}
		G(z\,|\,\omega_1,\widetilde\omega_1,\omega_2)=\exp\Bigg(\int_C \frac{-e^{(z+\overline\omega_1)s}}{(e^{\omega_1 s}-1)(e^{\widetilde\omega_1 s}-1)(e^{\omega_2 s}-1)}\frac{ds}{s}\Bigg),\end{equation}
		where the contour $C$ is the real axis from $-\infty$ to $+\infty$ avoiding the origin by a small detour in the upper half-plane.
	\end{itemize}
\end{proposition}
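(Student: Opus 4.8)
The plan is to deduce all three parts from standard analytic facts about the Barnes triple gamma function $\Gamma_3$ and the triple sine function $\text{sin}_3$, in the same spirit as the proof of Proposition~\ref{F} for the double sine, the only extra bookkeeping being the shift by $\overline\omega_1=(\omega_1+\widetilde\omega_1)/2$ in the argument of $\text{sin}_3$ and the nowhere-vanishing prefactor $\exp(\tfrac{\pi i}{6}B_{3,3})$. For part~(i) I would use that $1/\Gamma_3(\,\cdot\,|\,\omega_1,\widetilde\omega_1,\omega_2)$ is entire, hence so is $\text{sin}_3(w\,|\,\omega_1,\widetilde\omega_1,\omega_2)=\Gamma_3(w)^{-1}\,\Gamma_3(\omega_1+\widetilde\omega_1+\omega_2-w)^{-1}$, and therefore so is $G$, the exponential factor being nowhere zero; the zeros of $G$ are exactly those of $\text{sin}_3(z+\overline\omega_1\,|\,\omega_1,\widetilde\omega_1,\omega_2)$, located from the standard description of the zero locus of the triple sine---the zero cone of $\Gamma_3(\,\cdot\,)^{-1}$ together with its reflection coming from the second factor---translated by $-\overline\omega_1$, which is the lattice recorded in~(i). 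Single-valuedness in $(z,\omega_1,\widetilde\omega_1,\omega_2)$ is inherited from $\Gamma_3$, the hypothesis that $\omega_1,\widetilde\omega_1,\omega_2$ lie on one side of a line through the origin being exactly what guarantees this (cf.\ the references cited for Proposition~\ref{F}).

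For part~(ii) I would first note the shift relation for the triple sine coming from the Barnes recursion $\Gamma_3(z+\omega_1\,|\,\omega_1,\widetilde\omega_1,\omega_2)=\Gamma_3(z\,|\,\omega_1,\widetilde\omega_1,\omega_2)\,\Gamma_2(z\,|\,\widetilde\omega_1,\omega_2)^{-1}$ (and its $\omega_1\leftrightarrow\widetilde\omega_1$ analogue): applied to both $\Gamma_3$-factors of $\text{sin}_3=\Gamma_3(w)^{-1}\Gamma_3(\omega_1+\widetilde\omega_1+\omega_2-w)^{-1}$ it yields
\[
\frac{\text{sin}_3(w+\omega_1\,|\,\omega_1,\widetilde\omega_1,\omega_2)}{\text{sin}_3(w\,|\,\omega_1,\widetilde\omega_1,\omega_2)}=\text{sin}_2(w\,|\,\widetilde\omega_1,\omega_2)^{-1}.
\]
Then I would invoke the multiple Bernoulli identity $B_{3,3}(u+\omega_1\,|\,\omega_1,\widetilde\omega_1,\omega_2)-B_{3,3}(u\,|\,\omega_1,\widetilde\omega_1,\omega_2)=3\,B_{2,2}(u\,|\,\widetilde\omega_1,\omega_2)$, which drops out of the generating-function definition of the $B_{n,r}$ upon multiplying by $e^{\omega_1 s}-1$ and comparing coefficients of $s^3$. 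Combining the two at $u=z+\overline\omega_1$, the quotient $G(z+\omega_1)/G(z)$ becomes $\exp\!\big(\tfrac{\pi i}{2}B_{2,2}(z+\overline\omega_1\,|\,\widetilde\omega_1,\omega_2)\big)\,\text{sin}_2(z+\overline\omega_1\,|\,\widetilde\omega_1,\omega_2)^{-1}$, which is precisely $F(z+\overline\omega_1\,|\,\widetilde\omega_1,\omega_2)^{-1}$ by the definition~(\ref{FG}) of $F$; that is~(\ref{difG1}), and~(\ref{difG2}) follows from it by the $\omega_1\leftrightarrow\widetilde\omega_1$ symmetry of $\text{sin}_3$, $B_{3,3}$ and $\overline\omega_1$.

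For part~(iii) the most efficient route is to quote Narukawa's integral representation of the triple sine \cite{Naru}: for $\text{Re}(\omega_1),\text{Re}(\widetilde\omega_1),\text{Re}(\omega_2)>0$ and $0<\text{Re}(w)<\text{Re}(\omega_1+\widetilde\omega_1+\omega_2)$,
\[
\text{sin}_3(w\,|\,\omega_1,\widetilde\omega_1,\omega_2)=\exp\!\Bigg(-\frac{\pi i}{6}B_{3,3}(w\,|\,\omega_1,\widetilde\omega_1,\omega_2)+\int_C\frac{-e^{ws}}{(e^{\omega_1 s}-1)(e^{\widetilde\omega_1 s}-1)(e^{\omega_2 s}-1)}\,\frac{ds}{s}\Bigg),
\]
with $C$ the real axis detouring above the origin. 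Putting $w=z+\overline\omega_1$, the prefactor of $G$ cancels the Bernoulli term, and $\omega_1+\widetilde\omega_1+\omega_2=2\overline\omega_1+\omega_2$ converts the admissible strip into $-\text{Re}(\overline\omega_1)<\text{Re}(z)<\text{Re}(\overline\omega_1+\omega_2)$, which is exactly~(\ref{gint}). A self-contained alternative is to check that the integral on the right of~(\ref{gint}) converges and is holomorphic and non-vanishing on that strip, to show by contour shifting and the integral formula~(\ref{fint}) for $F$ that it obeys the same difference relations~(\ref{difG1}),~(\ref{difG2}) (together with the evident third relation in $\omega_2$), and then to pin down the resulting constant ratio by a boundedness argument; here the integrand has a fourth-order pole at the origin with residue $-\tfrac16 B_{3,3}(z+\overline\omega_1\,|\,\omega_1,\widetilde\omega_1,\omega_2)$, which is precisely the quantity that the prefactor in~(\ref{FG}) is built to absorb.

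Parts (i) and (ii) are essentially bookkeeping once the $\Gamma_3$/$\text{sin}_3$ machinery is in place. I expect the main obstacle to be part~(iii): checking convergence of the contour integral on the precise strip $-\text{Re}(\overline\omega_1)<\text{Re}(z)<\text{Re}(\overline\omega_1+\omega_2)$ and handling correctly the order-four pole at the origin picked up on the semicircular detour---the computation that both forces and is cancelled by the prefactor in the definition of $G$. A subtlety to watch throughout is that, because $\text{sin}_3$ is evaluated at $z+\overline\omega_1$, the double-sine / $F$-factors produced by the difference relations are evaluated at $z+\overline\omega_1$ rather than at $z$.
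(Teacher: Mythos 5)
Your proposal is correct and follows essentially the same route as the paper, which simply delegates parts (i) and (ii) to the Barnes/$\Gamma_3$ machinery in \cite[Appendix A]{JimMiw} and part (iii) to \cite[Proposition 2]{Naru}; you have merely filled in the details (the $\Gamma_3$ recursion, the identity $B_{3,3}(u+\omega_1)-B_{3,3}(u)=3B_{2,2}(u\,|\,\widetilde\omega_1,\omega_2)$, and the cancellation of the Bernoulli prefactor against the pole term in Narukawa's formula), all of which check out. The paper also notes, as you do, that (ii) can alternatively be derived from the integral representation.
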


\begin{proof}
The proposition is a generalization of \cite[Proposition 4.2]{Bri2}.
The general properties $(i)$ and $(ii)$ are covered in \cite[Appendix A]{JimMiw}. The integral formula $(iii)$ follows from \cite[Proposition 2]{Naru}. Alternatively the difference relations $(ii)$ can also be proved by using integral formula.
\end{proof}

\begin{proposition}
	\label{reflection}
	When $\text{Im}(\omega_1/\omega_2)>0$, $\text{Im}(\widetilde\omega_1/\omega_2)>0$ and $z\in \IC$ the following relations hold
	\begin{equation}\label{FF1}
		F(z+\omega_2\,|\,\overline\omega_1,\omega_2)\cdot F(z\,|\,\overline\omega_1,-\omega_2)=  \prod_{k\geq 0} \big(1-x_2 (q_2 \widetilde{q}_2)^{k/2}\big)\cdot \prod_{k\geq 1} \big(1-x_2^{-1} (q_2 \widetilde{q}_2)^{k/2}\big)^{-1},\end{equation}
	\begin{equation}\label{GG1}
		G(z+\omega_2\,|\,\omega_1,\widetilde\omega_1,\omega_2)\cdot G(z\,|\,\omega_1,\widetilde\omega_1,-\omega_2)= \prod_{k_1\geq 0, k_2\geq 0} \big(1-x_2 q_2^{k_1+\frac{1}{2}}\widetilde{q}_2^{k_2+\frac{1}{2}}\big)\cdot\prod_{k_1\geq 0, k_2\geq 0} \big(1-x_2^{-1} q_2^{k_1+\frac{1}{2}}\widetilde{q}_2^{k_2+\frac{1}{2}}\big).\end{equation}
\end{proposition}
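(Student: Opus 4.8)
The plan is to prove both identities by the same mechanism: substitute the infinite-product expansions of the double and triple sine functions on the left-hand sides and observe that the factors attached to the ``wrong'' parameters cancel in pairs against one another. The recurring subtlety is the negated parameter $-\omega_2$. The expansion in Proposition~\ref{F}(iii) requires $\mathrm{Im}(\overline\omega_1/\omega_2)>0$; under the present hypotheses $\mathrm{Im}(\overline\omega_1/\omega_2)=\tfrac12\bigl(\mathrm{Im}(\omega_1/\omega_2)+\mathrm{Im}(\widetilde\omega_1/\omega_2)\bigr)>0$, and this also forces $\mathrm{Im}(-\omega_2/\overline\omega_1)>0$. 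So, after using the symmetry of $F$ in its two arguments to rewrite $F(z\,|\,\overline\omega_1,-\omega_2)=F(z\,|\,{-\omega_2},\overline\omega_1)$, one may again invoke Proposition~\ref{F}(iii), now with $\overline\omega_1$ playing the role of the distinguished parameter.

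For \eqref{FF1} I argue directly. Applying Proposition~\ref{F}(iii) to $F(z+\omega_2\,|\,\overline\omega_1,\omega_2)$, the shift multiplies $x_1=e^{2\pi i z/\overline\omega_1}$ by $q_1$ and leaves $x_2=e^{2\pi i z/\omega_2}$ fixed (because $e^{2\pi i}=1$), so the first factor becomes $\prod_{k\geq 0}(1-x_1q_1^{-k})^{-1}$ while the second stays $\prod_{k\geq 0}(1-x_2(q_2\widetilde q_2)^{k/2})$. Applying Proposition~\ref{F}(iii) to $F(z\,|\,{-\omega_2},\overline\omega_1)$ and reading off its expansion with distinguished parameter $\overline\omega_1$ produces $\prod_{k\geq 1}(1-x_2^{-1}(q_2\widetilde q_2)^{k/2})^{-1}\cdot\prod_{k\geq 0}(1-x_1q_1^{-k})$. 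All four products converge absolutely under the hypotheses, so multiplying is legitimate; the $x_1$-products cancel exactly, and precisely the right-hand side of \eqref{FF1} survives.

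For \eqref{GG1} I use the same mechanism with the Narukawa-type product expansion of $G$ (the analogue of Proposition~\ref{F}(iii) for the triple sine, following \cite{Naru}): a principal double product over $k_1,k_2\geq 0$ in the variable attached to the distinguished parameter --- e.g.\ $x_2=e^{2\pi i z/\omega_2}$ when $\omega_2$ is distinguished --- times two correction factors in the variables attached to the other two parameters. I would expand $G(z+\omega_2\,|\,\omega_1,\widetilde\omega_1,\omega_2)$ with $\omega_2$ distinguished and $G(z\,|\,\omega_1,\widetilde\omega_1,-\omega_2)$ with $\omega_1$ distinguished (legitimate since $\omega_1,\widetilde\omega_1,-\omega_2$ all lie on one side of a line through the origin under the hypotheses); multiplying, the correction factors attached to $\omega_1$ and $\widetilde\omega_1$ are reciprocal and cancel in pairs, exactly as the $x_1$-products did in \eqref{FF1}, leaving the two triple products in $x_2$ and $x_2^{-1}$ of the right-hand side of \eqref{GG1}. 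The $\omega_1$-distinguished expansion needs the extra condition $\mathrm{Im}(\widetilde\omega_1/\omega_1)>0$, so the computation is only valid on a nonempty open subset of the hypothesis region; but both sides of \eqref{GG1} are meromorphic in $(\omega_1,\widetilde\omega_1,\omega_2)$, so the identity extends to the whole region by analytic continuation, the $\omega_1\leftrightarrow\widetilde\omega_1$-symmetry of both sides covering the complementary configuration. A more self-contained route is to combine the difference relations \eqref{difG1}--\eqref{difG2}, the already proven \eqref{FF1}, and the relation $G(z+\omega_2\,|\,\omega_1,\widetilde\omega_1,\omega_2)/G(z\,|\,\omega_1,\widetilde\omega_1,\omega_2)=F(z+\overline\omega_1\,|\,\omega_1,\widetilde\omega_1)^{-1}$ (immediate from the integral representations \eqref{gint} and \eqref{fint}) to show that the two sides of \eqref{GG1} obey the same first-order difference equations in $z$ under $z\mapsto z+\omega_1$, $z\mapsto z+\widetilde\omega_1$ and $z\mapsto z+\omega_2$; their ratio is then an elliptic function for the rank-two lattice $\IZ\omega_1+\IZ\omega_2$, it is holomorphic because the zero divisors of the two sides coincide (Proposition~\ref{G}(i) on the left, direct inspection on the right), hence constant, and the constant equals $1$ by the product computation above.

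The step I expect to be the main obstacle is exactly this bookkeeping forced by $-\omega_2$: since Proposition~\ref{F}(iii) and its triple-sine analogue hold only on half of parameter space, one must repeatedly change the distinguished parameter using the permutation symmetry of the multiple sine functions and then carefully track which infinite products are corrections attached to $\omega_1$ or $\widetilde\omega_1$ (hence cancelling) and which are the surviving $x_2^{\pm 1}$-products; for \eqref{GG1} the combinatorics of the triple sine make this genuinely delicate. In the difference-equation route the delicate point is instead the zero/pole count needed to promote the elliptic ratio to a constant, together with the measure-zero locus where $\omega_1$ and $\widetilde\omega_1$ become real-proportional and the lattice degenerates, which is disposed of by continuity.
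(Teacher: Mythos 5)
Your route is genuinely different from the paper's. The paper proves both identities uniformly from the integral representations \eqref{fint} and \eqref{gint}: after rotating the parameters by the homogeneity property, the two factors on the left-hand side become exponentials of contour integrals whose integrands differ by a sign, so the product is the exponential of a sum of residues at $s=2\pi i m/\omega_2$, $m\neq 0$, and summing those residues yields the right-hand products directly. You instead work from product expansions and cancel factors. For \eqref{FF1} your argument is correct and complete: Proposition \ref{F}(iii) applies to the first factor since $\mathrm{Im}(\overline\omega_1/\omega_2)>0$, the symmetry of $F$ in its two parameters together with $\mathrm{Im}(-\omega_2/\overline\omega_1)>0$ lets you expand the second factor with $\overline\omega_1$ distinguished, and the $x_1$-products indeed cancel (note that the expansion in Proposition \ref{F}(iii) depends on $\omega_1,\widetilde\omega_1$ only through $(q_2\widetilde q_2)^{1/2}=e^{2\pi i\overline\omega_1/\omega_2}$, so it really is the general two-parameter expansion you need). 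This buys a purely combinatorial proof of \eqref{FF1} with no contour deformation, at the price of not generalizing for free to the triple sine.

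For \eqref{GG1} there is a genuine gap. The entire argument rests on ``the Narukawa-type product expansion of $G$'' --- a factorization of $\sin_3$ with three unequal parameters into a principal double product plus two reciprocal correction factors --- which is neither stated in the paper (Proposition \ref{F}(iii) is only the $r=2$ case) nor derived by you, and the claimed pairwise cancellation of the $\omega_1$- and $\widetilde\omega_1$-attached factors is asserted rather than verified; you yourself flag this as ``genuinely delicate,'' which is exactly where a proof is required rather than a plan. Moreover one must track the shift by $\overline\omega_1$ built into the definition of $G$ in \eqref{FG}, which is what produces the half-integer exponents $q_2^{k_1+\frac12}\widetilde q_2^{k_2+\frac12}$ on the right of \eqref{GG1}; none of that bookkeeping is done. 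The fallback difference-equation argument does not close the gap either: after showing the ratio of the two sides of \eqref{GG1} is doubly periodic and holomorphic (the divisor comparison via Proposition \ref{G}(i) is itself a nontrivial check you leave to ``direct inspection''), you pin down the constant ``by the product computation above'' --- i.e.\ by the very unestablished expansion --- so the argument is circular at its final step. To repair this along your lines you would need to either prove the three-parameter product formula for $\sin_3$ (e.g.\ from \cite{Naru}) and carry out the cancellation explicitly, or evaluate the elliptic ratio at one explicit point; alternatively, the paper's residue computation handles $G$ with no more effort than $F$.
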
 

\begin{proof} The proposition is a generalization of \cite[Proposition 4.3]{Bri2}.
First of all $\omega_1$, $\widetilde\omega_1$, and $\omega_2$ all lie on the same side of some straight line through the origin, by the assumption $\text{Im}(\omega_1/\omega_2)>0$, and $\text{Im}(\widetilde\omega_1/\omega_2)>0$. 
Recall the homogeneity property of the multiple sine function \cite[Proposition 2(i)]{Naru},
\begin{equation}
\text{sin}_r(cz\,|\,c\omega_1,\cdots,c\omega_r) = \text{sin}_r(z\,|\,\omega_1,\cdots,\omega_r), \qquad \forall c \in \IC^*.
\end{equation}
This homogeneity property is also enjoyed by the functions 
$F(z\,|\,\overline\omega_1,\omega_2)$ and $G(z\,|\,\omega_1,\widetilde\omega_1,\omega_2)$ since 
\begin{equation}
	B_{n,r}(cz\,|\,c\omega_1,\cdots,c\omega_r) = c^{n-r} B_{n,r}(z\,|\,\omega_1,\cdots,\omega_r), \qquad \forall c \in \IC^*.
\end{equation}

Since we can rotate $\omega_1$, $\widetilde\omega_1$ and $\omega_2$ using the homogeneity property, we may assume $\omega_1$, $\widetilde\omega_1$ are in the right half-plane, and $\omega_2$ almost lies along negative imaginary line with a small positive real part.
Namely, $\text{arg}(\omega_2)= -\pi/2 +\epsilon_+$.

In order to apply the integral formula (\ref{fint}) for $F(z+\omega_2\,|\,\overline\omega_1,\omega_2)$, we require that 
\begin{equation}
0<\text{Re}(z+\omega_2)<\text{Re}(\overline\omega_1+\omega_2).
\end{equation} 

Therefor we have 
\begin{equation}\label{F1}
F(z+\omega_2\,|\,\overline\omega_1,\omega_2)=\exp\Bigg(\int_{C} \frac{e^{(z+\omega_2)s}}{(e^{\overline\omega_1 s}1-1)(e^{\omega_2 s}-1)}\frac{ds}{s}\Bigg) , \qquad 0<\text{Re}(z)<\text{Re}(\overline\omega_1).
\end{equation}

As for $F(z\,|\,\overline\omega_1,-\omega_2)$, we first choose $c= e^{i(-2\epsilon_+)} \in \IC^*$ to rotate $-\omega_2$ into the right half plane. 
We obtain
\begin{align}\label{F2}
F(z\,|\,\overline\omega_1,-\omega_2) =& F(cz\,|\,c\overline\omega_1,-c\omega_2)=\exp\Bigg(\int_C \frac{e^{czs}}{(e^{c\overline\omega_1 s}-1)(e^{-c\omega_2 s}-1)}\frac{ds}{s}\Bigg) \nonumber \\
=& \exp\Bigg(\int_C \frac{e^{z(cs)}}{(e^{\overline\omega_1(cs) }-1)(e^{-\omega_2 (cs)}-1)}\frac{d(cs)}{cs}\Bigg) \nonumber \\
=& \exp\Bigg(\int_{c \cdot C} \frac{e^{zs}}{(e^{\overline\omega_1 s }-1)(e^{-\omega_2 s}-1)}\frac{ds}{s}\Bigg),  0<\text{Re}(cz)<\text{Re}\big(c(\overline\omega_1-\omega_2)\big).
\end{align} 

Combining (\ref{F1}) and (\ref{F2}) and choosing $z$ to lie in the common valid region, we have
\begin{equation}
F(z+\omega_2\,|\,\overline\omega_1,\omega_2)F(z\,|\,\overline\omega_1,-\omega_2)=\exp\Bigg(\int_{C} \frac{e^{(z+\omega_2)s}}{(e^{\overline\omega_1 s}-1)(e^{\omega_2 s}-1)}\frac{ds}{s} + \int_{c\cdot C} \frac{e^{zs}}{(e^{\overline\omega_1 s }-1)(e^{-\omega_2 s}-1)}\frac{ds}{s} \Bigg). 
\end{equation} 
Since the integrands differ by a minus sign, the expression in the exponential picks up the residues at the point $s=2\pi i m/\omega_2$ for $m \in \IZ \setminus \{0\}$, weighted by the signature of $m$.

We obtain 
\begin{align}
& F(z+\omega_2\,|\,\overline\omega_1,\omega_2)F(z\,|\,\overline\omega_1,-\omega_2) = \exp \big( \sum_{m\neq 0} 2\pi i \,\text{Res}_{s=\frac{2\pi i m}{\omega_2}} \frac{e^{(z+\omega_2)s}}{(e^{\overline\omega_1 s}-1)(e^{\omega_2 s}-1)s} \cdot \text{sgn}(m) \big) \nonumber \\
& = \exp \big( \sum_{m \geq 1} \frac{-x_2^m}{m(1-(q_2 \widetilde{q}_2)^{m/2})} + \sum_{m \geq 1} \frac{x_2^{-m}(q_2 \widetilde{q}_2)^{m/2}}{m(1-(q_2 \widetilde{q}_2)^{m/2})} \big) \nonumber \\
& = \prod_{k\geq 0} \big(1-x_2 (q_2 \widetilde{q}_2)^{k/2}\big)\cdot \prod_{k\geq 1} \big(1-x_2^{-1} (q_2 \widetilde{q}_2)^{k/2}\big)^{-1}.
\end{align}
So we have shown that in the allowed open dense region of $z$ for the integral formula of $F$ to hold, $F(z+\omega_2\,|\,\overline\omega_1,\omega_2)F(z\,|\,\overline\omega_1,-\omega_2)$ is equal to the infinite product on the right hand side in (\ref{FF1}), which is an analytic function for $\text{Im}(\omega_1/\omega_2)>0$, $\text{Im}(\widetilde\omega_1/\omega_2)>0$. Therefore they are equal for all $z\in \IC$ by analytic continuation.

Applying the identical argument to prove (\ref{GG1}), we obtain
\begin{align}
& G(z+\omega_2\,|\,\omega_1,\widetilde\omega_1,\omega_2)\cdot G(z\,|\,\omega_1,\widetilde\omega_1,-\omega_2)  \nonumber \\
&= \exp\Bigg(\int_{C} \frac{-e^{(z+\overline\omega_1+\omega_2)s}}{(e^{\omega_1 s}-1)(e^{\widetilde\omega_1 s}-1)(e^{\omega_2 s}-1)}\frac{ds}{s} + 
\int_{c \cdot C} \frac{-e^{(z+\overline\omega_1)s}}{(e^{\omega_1 s}-1)(e^{\widetilde\omega_1 s}-1)(e^{-\omega_2 s}-1)}\frac{ds}{s} \Bigg) \nonumber \\
&= \exp \Bigg( \sum_{m\neq 0} 2\pi i \,\text{Res}_{s={2\pi i m}{\omega_2}} \frac{-e^{(z+\overline\omega_1+\omega_2)s}}{(e^{\omega_1 s}-1)(e^{\omega_1 s}-1)(e^{\omega_2 s}-1)s} \cdot  \text{sgn}(m)\Bigg) \nonumber \\
&= \exp \Bigg( \sum_{m\geq 1} \frac{-x_2^m q_2^{m/2} \widetilde{q}_2^{m/2}}{m(1-q_2^m)(1-\widetilde{q}_2^m)} + \sum_{m\geq 1} \frac{-x_2^{-m} q_2^{m/2} \widetilde{q}_2^{m/2}}{m(1-q_2^m)(1-\widetilde{q}_2^m)} \Bigg) \nonumber \\
&= \prod_{k_1\geq 0, k_2\geq 0} \big(1-x_2 q_2^{k_1+\frac{1}{2}}\widetilde{q}_2^{k_2+\frac{1}{2}}\big)\cdot\prod_{k_1\geq 0, k_2\geq 0} \big(1-x_2^{-1} q_2^{k_1+\frac{1}{2}}\widetilde{q}_2^{k_2+\frac{1}{2}}\big).
\end{align} 
This completes the proof of (\ref{GG1}).
\end{proof}

 Since later we will set $-\omega_2$ to be $t\in \mathcal{H}_{
 \ell_n}$, we study the asymptotic behaviors of $F$ and $G$ as $\omega_2 \to 0$ and $\infty$.

\begin{proposition}\label{asmp0}
	Fix $z\in \IC$ and $\omega_1, \widetilde\omega_1 \in \IC^*$ with $\text{Re}(\omega_1)>0$, $\text{Re}(\widetilde\omega_1)>0$, $0<\text{Re}(z)<\text{Re}(\overline\omega_1)$, $\text{Im}(z/\omega_1)>0$, and $\text{Im}(z/\widetilde\omega_1)>0$. Then as $\omega_2\to 0$ in any closed subsector $\Sigma$ of the half-plane $\text{Re}(\omega_2)>0$ we have asymptotic expansions
	\begin{equation}\label{Fasymp0}
		\log F(z\,|\,\overline\omega_1,\omega_2)\sim 
		\sum_{k\geq 0} \frac{B_k\cdot \omega_2^{k-1} }{k!}  \cdot f_{k-2}(z, \overline\omega_1),
	\end{equation}
	\begin{equation}\label{Gasmp0}
		\log G(z\,|\,\omega_1,\widetilde\omega_1,\omega_2)\sim  \sum_{k\geq 0} \frac{B_k\cdot \omega_2^{k-1} }{k!} \cdot  g_{k-2}(z,\omega_1,\widetilde\omega_1),
	\end{equation}
where the function $f_{k-2}$ and $g_{k-2}$ are holomorphic functions defined by
	\begin{equation}
		f_{k-2}(z, \overline\omega_1)=\int_C \frac{e^{zs}\,s^{k-2}}{e^{\overline\omega_1 s}-1} \ ds,
	\end{equation} 
	\begin{equation}
		g_{k-2}(z,\omega_1,\widetilde\omega_1) = \int_C
		\frac{-e^{(z+\overline\omega_1)s}\,s^{k-2}}{(e^{\omega_1 s}-1)(e^{\widetilde\omega_1 s}-1) } \ ds ,
	\end{equation} where the contour $C$ is along the real axis from $-\infty$ to $\infty$ avoiding the origin by a small semicircle in the upper half plane. 
\end{proposition}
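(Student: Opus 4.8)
The plan is to reduce everything to the integral representations (\ref{fint}) and (\ref{gint}) of Propositions \ref{F}(iv) and \ref{G}(iii), substitute the generating-function expansion of the Bernoulli numbers for the factor $(e^{\omega_2 s}-1)^{-1}$, and then justify by a Watson-type estimate that the resulting formal series is a genuine asymptotic expansion as $\omega_2\to 0$ in the closed subsector $\Sigma$.

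First I would verify that the hypotheses place us inside the domains of validity of those formulas. Since $\text{Re}(\omega_1),\text{Re}(\widetilde\omega_1)>0$ we have $\text{Re}(\overline\omega_1)>0$, and together with $\text{Re}(\omega_2)>0$ and $0<\text{Re}(z)<\text{Re}(\overline\omega_1)$ this gives $0<\text{Re}(z)<\text{Re}(\overline\omega_1+\omega_2)$ and $-\text{Re}(\overline\omega_1)<\text{Re}(z)$, so (\ref{fint}) and (\ref{gint}) apply; moreover by Proposition \ref{F}(i) the point $z$ is neither a zero nor a pole of $F$, so $\log F(z\,|\,\overline\omega_1,\omega_2)$ is unambiguously the right-hand side of (\ref{fint}). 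Writing $R_N(u)$ for the order-$N$ Taylor remainder of $u/(e^u-1)$ at $0$, so that $|R_N(u)|\le C_N|u|^{N+1}$ for $|u|\le\rho$ with any fixed $\rho<2\pi$, we have
\begin{equation}
\frac{1}{e^{\omega_2 s}-1}=\sum_{k=0}^{N}\frac{B_k\,\omega_2^{k-1}\,s^{k-1}}{k!}+\frac{R_N(\omega_2 s)}{\omega_2 s}.
\end{equation}
Inserting this into (\ref{fint}) and exchanging the finite sum with the integral produces the principal part $\sum_{k=0}^N \frac{B_k\omega_2^{k-1}}{k!}f_{k-2}(z,\overline\omega_1)$ with $f_{k-2}$ exactly as defined; the same substitution in (\ref{gint}), using $\omega_1+\widetilde\omega_1=2\overline\omega_1$, yields the $g_{k-2}(z,\omega_1,\widetilde\omega_1)$. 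That $f_{k-2}$ and $g_{k-2}$ are holomorphic in $z,\omega_1,\widetilde\omega_1$ follows by differentiating under the integral sign, the integrands decaying exponentially at both ends of $C$ locally uniformly on the allowed region.

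To upgrade the formal identity to an asymptotic expansion I would split $C=C_{\mathrm{in}}\sqcup C_{\mathrm{out}}$ at $|s|=\rho/|\omega_2|$. On $C_{\mathrm{out}}$ one has $|\omega_2 s|\ge\rho$ and, since $\Sigma$ is a proper closed subsector of $\{\text{Re}(\omega_2)>0\}$, $\text{Re}(\omega_2 s)$ stays bounded away from $0$ there; hence $|e^{\omega_2 s}-1|$ is bounded below and the integrand of (\ref{fint}) is $O(|s|^{-1}e^{-c|s|})$ with $c=\min\{\text{Re}(z),\ \text{Re}(\overline\omega_1)-\text{Re}(z)\}>0$, uniformly in $\omega_2\in\Sigma$. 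Thus $\int_{C_{\mathrm{out}}}=O(|\omega_2|\,e^{-c\rho/|\omega_2|})$, smaller than any power of $\omega_2$; the identical bound disposes of the tails $\int_{C_{\mathrm{out}}}e^{zs}s^{k-2}(e^{\overline\omega_1 s}-1)^{-1}ds$ that must be restored to complete each $f_{k-2}$. On $C_{\mathrm{in}}$ one has $|\omega_2 s|\le\rho$, so
\begin{equation}
\Bigl|\int_{C_{\mathrm{in}}}\frac{e^{zs}}{(e^{\overline\omega_1 s}-1)\,s}\cdot\frac{R_N(\omega_2 s)}{\omega_2 s}\,ds\Bigr|\le C_N|\omega_2|^{N}\int_{C}\Bigl|\frac{e^{zs}\,s^{N-1}}{e^{\overline\omega_1 s}-1}\Bigr|\,|ds|=O(|\omega_2|^N)
\end{equation}
for $N\ge 2$, the last integral being finite since its integrand is $O(|s|^{N-2})$ near the origin and decays exponentially at infinity. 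Combining the pieces gives $\log F(z\,|\,\overline\omega_1,\omega_2)-\sum_{k=0}^N \frac{B_k\omega_2^{k-1}}{k!}f_{k-2}(z,\overline\omega_1)=O(|\omega_2|^N)$ for every $N\ge 2$, which is (\ref{Fasymp0}). The proof of (\ref{Gasmp0}) is word for word the same, replacing $e^{\overline\omega_1 s}-1$ by $(e^{\omega_1 s}-1)(e^{\widetilde\omega_1 s}-1)$ and $e^{zs}$ by $-e^{(z+\overline\omega_1)s}$; the decay exponent at $+\infty$ is again $\text{Re}(\overline\omega_1)-\text{Re}(z)>0$ because $(z+\overline\omega_1)-\omega_1-\widetilde\omega_1=z-\overline\omega_1$.

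The main obstacle is exactly the pair of uniform estimates on $C_{\mathrm{out}}$: the exponential decay of the $F$- and $G$-integrands must be kept uniform in $\omega_2$ as $\omega_2\to 0$, and this forces the restriction to a \emph{closed} subsector $\Sigma$ rather than the full half-plane $\{\text{Re}(\omega_2)>0\}$ — approaching the imaginary axis, $|e^{\omega_2 s}-1|$ need not be bounded below on $C_{\mathrm{out}}$ and the cut-off $|s|\sim 1/|\omega_2|$ no longer isolates an exponentially small remainder. Everything else is routine bookkeeping with the Bernoulli remainder $R_N$ and with the small semicircular detour of $C$ around the origin, which is what makes the $k=0,1$ terms $f_{-2},f_{-1}$ and $g_{-2},g_{-1}$ well defined.
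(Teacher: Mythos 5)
Your proof is correct and takes the same route as the paper: reduce to the integral representations (\ref{fint}), (\ref{gint}) and insert the Bernoulli-number Laurent expansion of $(e^{\omega_2 s}-1)^{-1}$. The paper stops at the formal substitution and writes $\sim$; your contour split at $|s|=\rho/|\omega_2|$, the uniform lower bound on $|e^{\omega_2 s}-1|$ over the closed subsector, and the $O(|\omega_2|^N)$ remainder bound supply exactly the justification the paper leaves implicit.
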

\begin{proof}
In order to apply the integral representation (\ref{fint}) and (\ref{gint})
we need to require $0<\text{Re}(z)<\text{Re}(\overline\omega_1+\omega_2)$ for $F(z\,|\,\overline\omega_1,\omega_2)$ and $0<\text{Re}(z+\overline\omega_1)<\text{Re}(\omega_1+\widetilde{\omega}_1+\omega_2)$ for $G(z\,|\,\omega_1,\widetilde\omega_1,\omega_2)$, which are equivalent to the assumption $0<\text{Re}(z)<\text{Re}(\overline\omega_1)$ as $\omega_2 \to 0$.

By the integral representation formula (\ref{fint}) and the Laurent expansion
	\begin{equation}
		\frac{1}{e^{\omega_2 s} -1} = \sum_{k\geq 0} \frac{B_k\cdot (\omega_2 s)^{k-1} }{k!} ,
	\end{equation} we have
\begin{align}
	\log F(z\,|\,\overline\omega_1,\omega_2) \sim & \int_C \frac{e^{zs}}{e^{\overline\omega_1 s}-1} \sum_{k\geq 0} \frac{B_k\cdot (\omega_2)^{k-1} (s)^{k-2} }{k!} \, ds \nonumber \\
	= & \sum_{k\geq 0} \frac{B_k\cdot \omega_2^{k-1} }{k!}  \cdot f_{k-2}(z, \overline\omega_1). 
\end{align} The computation for (\ref{Gasmp0}) goes similarly. 
\end{proof}

	We would like to strengthen a bit Proposition \ref{asmp0}. Suppose $\text{Im}(z/\overline{\omega}_1)>0$, $\text{Im}(z/\omega_2)>0$, and $z=r e^{i(\theta+ \pi/2)}$ with $r>0$.
	
	Choosing $c=e^{-i(\theta + \epsilon_+)}$, we have
	\begin{align}
	 \log F(z\,|\,\overline\omega_1,\omega_2) =& \log F(cz\,|\,c\overline\omega_1,c\omega_2) \sim \int_C \frac{e^{czs}}{e^{c\overline\omega_1 s}-1} \sum_{k\geq 0} \frac{B_k\cdot (c\omega_2)^{k-1} (s)^{k-2} }{k!} \, ds \nonumber \\
	& =  \sum_{k\geq 0} \frac{B_k\cdot \omega_2^{k-1} }{k!}  \cdot \int_C \frac{e^{czs}\,(cs)^{k-2}}{e^{c\overline\omega_1 s}-1} \ d(cs).  
	\end{align}
	Therefore we obtain the following proposition.

\begin{proposition}\label{asmp00}
	Fix $z=r e^{i(\theta+ \pi/2)}\in \IC$ and $\omega_1, \widetilde\omega_1 \in \IC^*$ with $\text{Im}(z/\omega_1)>0$, and $\text{Im}(z/\widetilde\omega_1)>0$. Then as $\omega_2\to 0$ in any closed subsector $\Sigma$ of the half-plane $H=\{a\in\IC^*\,|\,\text{Im}(z/a)>0\}$, we have asymptotic expansions
	\begin{equation}\label{Fasymp00}
		\log F(z\,|\,\overline\omega_1,\omega_2)\sim 
		\sum_{k\geq 0} \frac{B_k\cdot \omega_2^{k-1} }{k!}  \cdot f^c _{k-2}(z, \overline\omega_1),
	\end{equation}
	\begin{equation}\label{Gasmp00}
		\log G(z\,|\,\omega_1,\widetilde\omega_1,\omega_2)\sim  \sum_{k\geq 0} \frac{B_k\cdot \omega_2^{k-1} }{k!} \cdot  g^c_{k-2}(z,\omega_1,\widetilde\omega_1),
	\end{equation}
	where $c=e^{-i(\theta + \epsilon_+)}$ and the function $f^c_{k-2}$ and $g^c_{k-2}$ are holomorphic functions defined by
	\begin{equation}\label{fc}
		f^c_{k-2}(z, \overline\omega_1)=\int_{c\cdot C} \frac{e^{zs}\,s^{k-2}}{e^{\overline\omega_1 s}-1} \ ds,
	\end{equation} 
	\begin{equation}\label{gc}
		g^c_{k-2}(z,\omega_1,\widetilde\omega_1) = \int_{c\cdot C}
		\frac{-e^{(z+\overline\omega_1)s}\,s^{k-2}}{(e^{\omega_1 s}-1)(e^{\widetilde\omega_1 s}-1) } \ ds ,
	\end{equation} where the contour $C$ is along the real axis from $-\infty$ to $\infty$ avoiding the origin by a small semicircle in the upper half plane. Notice that $c$ depends on the first argument of the functions $f_{k-2}^c$ and $g_{k-2}^c$.
\end{proposition}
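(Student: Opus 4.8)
The plan is to deduce Proposition \ref{asmp00} from Proposition \ref{asmp0} by rotating the entire configuration with the scaling parameter $c$ and then absorbing that rotation into the contour. The key tool is the homogeneity already used in the proof of Proposition \ref{reflection}: from $\text{sin}_r(cz\,|\,c\omega_1,\ldots,c\omega_r)=\text{sin}_r(z\,|\,\omega_1,\ldots,\omega_r)$, $B_{n,r}(cz\,|\,c\omega_1,\ldots,c\omega_r)=c^{n-r}B_{n,r}(z\,|\,\omega_1,\ldots,\omega_r)$, and the linearity of $\omega_1\mapsto\overline\omega_1$, one obtains $F(z\,|\,\overline\omega_1,\omega_2)=F(cz\,|\,c\overline\omega_1,c\omega_2)$ and $G(z\,|\,\omega_1,\widetilde\omega_1,\omega_2)=G(cz\,|\,c\omega_1,c\widetilde\omega_1,c\omega_2)$ for every $c\in\IC^*$, hence the same identities for $\log F$ and $\log G$ once the branches are fixed compatibly. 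So I would take $c=e^{-i(\theta+\epsilon_+)}$ with a small $\epsilon_+>0$, apply Proposition \ref{asmp0} to the rotated data $(cz;c\omega_1,c\widetilde\omega_1,c\omega_2)$, and then undo the rotation by the change of variables $s\mapsto cs$ in the integrals.

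The second step is to check that this $c$ does send the data into the range of Proposition \ref{asmp0}. Since $z=re^{i(\theta+\pi/2)}$ we have $\arg(cz)=\pi/2-\epsilon_+$, so $\text{Re}(cz)=r\sin\epsilon_+>0$ and small. The hypotheses $\text{Im}(z/\omega_1)>0$, $\text{Im}(z/\widetilde\omega_1)>0$ place $\arg\omega_1,\arg\widetilde\omega_1$ in $(\theta-\tfrac{\pi}{2},\theta+\tfrac{\pi}{2})$, so $\arg(c\omega_1),\arg(c\widetilde\omega_1)$ lie in $(-\tfrac{\pi}{2}-\epsilon_+,\tfrac{\pi}{2}-\epsilon_+)$; choosing $\epsilon_+$ below the angular distance of $\arg\omega_1$ and of $\arg\widetilde\omega_1$ from the ray $\theta-\tfrac{\pi}{2}$ gives $\text{Re}(c\omega_1)>0$, $\text{Re}(c\widetilde\omega_1)>0$. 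Because $\Sigma$ is a \emph{closed} subsector of $H=\{a\in\IC^*\,|\,\text{Im}(z/a)>0\}$, it is separated from the two boundary rays of $H$ by some $\delta>0$; for $\epsilon_+<\delta$ one then gets $\text{Re}(c\omega_2)>0$ uniformly for $\omega_2\in\Sigma$, i.e. $c\cdot\Sigma$ is a closed subsector of $\{\text{Re}>0\}$. One also needs $0<\text{Re}(cz)<\text{Re}(c\overline\omega_1)$: since $\arg\overline\omega_1\in(\theta-\tfrac{\pi}{2},\theta+\tfrac{\pi}{2})$ we have $\text{Re}(e^{-i\theta}\overline\omega_1)>0$, so $\text{Re}(c\overline\omega_1)$ stays bounded below while $\text{Re}(cz)=r\sin\epsilon_+\to0$, and the inequality holds for $\epsilon_+$ sufficiently small. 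Finally $\text{Im}(cz/c\omega_1)=\text{Im}(z/\omega_1)>0$ and $\text{Im}(cz/c\widetilde\omega_1)=\text{Im}(z/\widetilde\omega_1)>0$ are automatic, and $\overline{c\omega_1}=(c\omega_1+c\widetilde\omega_1)/2=c\overline\omega_1$.

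Granting this, Proposition \ref{asmp0} applied to the rotated configuration gives, as $c\omega_2\to0$ in $c\cdot\Sigma$, the expansions $\log F(z\,|\,\overline\omega_1,\omega_2)\sim\sum_{k\geq0}\tfrac{B_k(c\omega_2)^{k-1}}{k!}\,f_{k-2}(cz,c\overline\omega_1)$ and $\log G(z\,|\,\omega_1,\widetilde\omega_1,\omega_2)\sim\sum_{k\geq0}\tfrac{B_k(c\omega_2)^{k-1}}{k!}\,g_{k-2}(cz,c\omega_1,c\widetilde\omega_1)$. Writing $(c\omega_2)^{k-1}=c^{k-1}\omega_2^{k-1}$ and substituting $s\mapsto cs$ in the defining integrals, the powers of $c$ cancel exactly (the $s^{k-2}$ in the integrand and the measure $ds$ together carry $c^{-(k-1)}$), so $c^{k-1}f_{k-2}(cz,c\overline\omega_1)=\int_{c\cdot C}\tfrac{e^{zs}s^{k-2}}{e^{\overline\omega_1 s}-1}\,ds=f^c_{k-2}(z,\overline\omega_1)$ and likewise $c^{k-1}g_{k-2}(cz,c\omega_1,c\widetilde\omega_1)=g^c_{k-2}(z,\omega_1,\widetilde\omega_1)$, using $\overline{c\omega_1}=c\overline\omega_1$ for the latter. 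This is precisely (\ref{Fasymp00}) and (\ref{Gasmp00}). As an afterthought, deforming the ray-rotated contour within the thin wedge of admissible directions crosses no pole of the integrand and preserves its decay at both ends, so $f^c_{k-2}$ and $g^c_{k-2}$ are independent of the precise small $\epsilon_+$ and depend only on $\theta=\arg z-\tfrac{\pi}{2}$, which justifies viewing $c$ as a function of the first argument alone.

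I expect the only genuinely fiddly point to be the argument bookkeeping in the second step: arranging one $\epsilon_+$ that simultaneously rotates $\omega_1$, $\widetilde\omega_1$ and all of $\Sigma$ into the open right half-plane while keeping $0<\text{Re}(cz)<\text{Re}(c\overline\omega_1)$, uniformly as $\omega_2\to0$. There is no new analytic content beyond Proposition \ref{asmp0}: the fact that integrating term by term against the Laurent tail of $1/(e^{c\omega_2 s}-1)$ produces a genuine asymptotic (not convergent) series is inherited verbatim from its proof.
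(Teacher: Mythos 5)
Your proposal is correct and takes essentially the same route as the paper: the paper likewise uses the homogeneity of $F$ and $G$ to rotate by $c=e^{-i(\theta+\epsilon_+)}$, applies Proposition \ref{asmp0} to the rotated data, and substitutes $s\mapsto cs$ so that the powers of $c$ cancel against $(c\omega_2)^{k-1}$ and the coefficients become integrals over $c\cdot C$. Your bookkeeping verifying that the rotated configuration satisfies the hypotheses of Proposition \ref{asmp0} is more detailed than what the paper records, but there is no difference in method.
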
	


\begin{lemma} \cite[Proposition 4.5]{Bri2} \label{Brilemma}
	Let $\omega \in \IC^*$ with $\text{Re}(\omega)>0$. Then for all $d\in\IZ$ the relation 
	\begin{equation}
		-\int_C \frac{e^{\omega s} \cdot s^{1-d}}{(e^{\omega s}-1)^2}\, ds = \frac{(d-1)\zeta(d)}{2 \pi i} \cdot \big(\frac{\omega}{2\pi i}\big)^{d-2},
	\end{equation} holds where the contour $C$ is along the real axis from $-\infty$ to $+\infty$ with a small detour around the origin in the upper half plane. When $d=1$ the factor $(d-1)\zeta(d)$ on right hand side of the formula is understood to be $1$.
\end{lemma}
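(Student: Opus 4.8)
The plan is to evaluate the integral separately in each residue class and range of $d$, the true dividing line being whether the integrand decays fast enough to permit closing the contour. Write $\psi(s)=e^{\omega s}/(e^{\omega s}-1)^{2}$. Four properties of $\psi$ do the work: it is an \emph{even} function of $s$; it decays exponentially as $|\text{Re}(\omega s)|\to\infty$; it is periodic of period $2\pi i/\omega$, hence bounded away from its poles; and near each pole $s_{n}:=2\pi i n/\omega$ ($n\in\IZ$) it has the Laurent expansion $\psi(s)=\dfrac{1}{\omega^{2}(s-s_{n})^{2}}+O(1)$, \emph{with no simple-pole term}. (A change of variables $w=\omega s$ together with a contour rotation reduces everything to $\omega=1$, since $\psi$ has no poles off the imaginary axis and decays in the sector swept out; this is only cosmetic and I will keep $\omega$ general.)

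For $d\ge 2$ I would close $C$ by a large semicircle $S_{R}$ in the upper half-plane, choosing $R$ to fall between consecutive poles. Because $\text{Re}(\omega)>0$ the poles $s_{n}$ with $n\ge 1$ lie in the upper half-plane, and these are exactly the ones enclosed (the contour skirts $s_{0}=0$ from above). Pairing the $(s-s_{n})^{-2}$ term of $\psi$ against the linear term of $(s_{n}+u)^{1-d}$ gives $\text{Res}_{s=s_{n}}\,e^{\omega s}s^{1-d}/(e^{\omega s}-1)^{2} =(1-d)\,\omega^{d-2}/\big((2\pi i)^{d}n^{d}\big)$; summing over $n\ge 1$ produces the factor $\zeta(d)$, and multiplying by $2\pi i$ yields exactly the asserted value. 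The only delicate point is $\int_{S_{R}}\to 0$: the crude $ML$-bound gives only $O(R^{2-d})$, which fails at $d=2$, so one uses the exponential decay of $\psi$ away from the imaginary axis --- the arc on which $|\text{Re}(\omega s)|$ stays bounded has length $O(1)$, independent of $R$ --- to improve this to $O(R^{1-d})\to 0$ for all $d\ge 2$.

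The values $d\le 1$ cannot be reached this way and are treated directly. For $d=1$ the integrand $\psi$ has the elementary antiderivative $-1/\big(\omega(e^{\omega s}-1)\big)$, which is $0$ at $+\infty$ and $1/\omega$ at $-\infty$, so $-\int_{C}\psi=1/\omega$, matching the stated convention $(d-1)\zeta(d)\mapsto 1$. For $d\le 0$ even the integrand $s^{1-d}\psi(s)$ is odd, so the upper detour forces $\int_{C}=-\pi i\,\text{Res}_{s=0}$, which equals $-\pi i/\omega^{2}$ when $d=0$ and $0$ when $d\le -2$, in accordance with $\zeta(0)=-\tfrac12$ and $\zeta(-2)=\zeta(-4)=\cdots=0$. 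For $d\le -1$ odd the factor $s^{1-d}$ cancels the pole at the origin and the integrand is even, so $\int_{C}=\int_{-\infty}^{\infty}=2\int_{0}^{\infty}s^{1-d}\psi(s)\,ds$; expanding $\psi(s)=\sum_{k\ge 1}k\,e^{-k\omega s}$ and integrating term by term gives $2\,\Gamma(2-d)\,\zeta(1-d)/\omega^{2-d}$, and rewriting this in the claimed form $\tfrac{(d-1)\zeta(d)}{2\pi i}\big(\tfrac{\omega}{2\pi i}\big)^{d-2}$ is precisely the functional equation of $\zeta$ --- equivalently Euler's formula $\zeta(2n)=(-1)^{n+1}(2\pi)^{2n}B_{2n}/\big(2(2n)!\big)$ together with $\zeta(1-2n)=-B_{2n}/(2n)$, $B_{2n}$ the Bernoulli numbers.

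The genuinely non-routine content sits in the range $d\le 0$, where the contour integral carries no information and the asserted identity is, at bottom, the reflection/functional equation of the Riemann zeta function --- the same mechanism already visible in Proposition \ref{reflection}, where the reflection of $F$ and $G$ generated the relevant infinite products. Everything with $d\ge 1$ is then organized residue calculus, the one trap being the sharpened large-arc estimate required exactly at $d=2$.
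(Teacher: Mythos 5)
Your argument is correct in every case, and I have checked the constants: the double-pole-only Laurent expansion of $\psi$ gives residue $(1-d)\omega^{d-2}/\big((2\pi i)^{d}n^{d}\big)$ at $s_{n}$, the enclosed poles for $\mathrm{Re}(\omega)>0$ are exactly $n\geq 1$, and $-2\pi i\sum_{n\geq1}$ of these residues reproduces $(d-1)\zeta(d)(2\pi i)^{1-d}\omega^{d-2}$, which is the stated right-hand side; the antiderivative $-1/\big(\omega(e^{\omega s}-1)\big)$ settles $d=1$ in agreement with the $(d-1)\zeta(d)\mapsto 1$ convention; the clockwise small semicircle gives $-\pi i/\omega^{2}$ for $d=0$ against the target $\pi i/\omega^{2}$ after the overall sign; and the identity $-2\,\Gamma(2-d)\zeta(1-d)=(d-1)\zeta(d)(2\pi i)^{1-d}$ for $d=1-2n$ is exactly Euler's formula combined with $\zeta(1-2n)=-B_{2n}/(2n)$. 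You also correctly flag the two genuinely delicate points: the absence of a simple-pole term in $\psi$ (without which the residues would be wrong), and the sharpened arc estimate needed at $d=2$, where the crude $ML$ bound is only $O(1)$ and one must use that the arc meets the strip $|\mathrm{Re}(\omega s)|\leq M$ in bounded length.

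For the comparison you asked about: this paper does not prove the lemma at all — it is imported verbatim as \cite[Proposition 4.5]{Bri2} and used as a black box in the asymptotic expansion (\ref{Gprime}). So there is no in-paper proof to measure yours against; what you have written is a complete, self-contained verification of the cited result, organized (as in Bridgeland's original) around residue calculus for $d\geq 2$ and direct evaluation plus the functional equation of $\zeta$ for $d\leq 1$. No gaps.
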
 

\begin{proposition}
	(i) Suppose $z\in\IC$ with $\text{Im}(z/\overline\omega_1)>0$. Then as $\omega_2\to \infty$ in any closed subsector $\Sigma$ of the half-plane $H=\{a\in\IC^*\,|\,\text{Im}(z/a)>0\}$ we have the asymptotic expansion
	\begin{equation}
		\log F(z\,|\,\overline\omega_1,\omega_2)\sim -\frac{\pi i}{12}\cdot \frac{ \omega_2}{\overline\omega_1}+B_1(z/\overline\omega_1)\cdot \log(\omega_2)+O(1).
	\end{equation} 
	(ii) Suppose $z\in\IC$, $\text{Im}(z/\omega_1)>0$, and  $\text{Im}(z/\widetilde\omega_1)>0$. Then as $\omega_2 \to \infty$ in any closed subsector $\Sigma$ of the half-plane $H=\{a\in\IC^*\,|\,\text{Im}(z/a)>0\}$ we have the asymptotic expansion
	\begin{align}\label{Gasympinfty}
	\log G(z\,|\,\omega_1,\widetilde\omega_1,\omega_
	2) \sim\, & \frac{B_{0,2}(z+\overline\omega_1\,|\,\omega_1,\widetilde\omega_1)\zeta(3)}{4\pi^2}\cdot \omega_2^2 - \frac{B_{1,2}(z+\overline\omega_1\,|\,\omega_1,\widetilde\omega_1) \zeta(2)}{2\pi i}\cdot \omega_2 \nonumber \\
	& -\frac{B_{2,2}(z+\overline\omega_1\,|\,\omega_1,\widetilde\omega_1)}{2} \log(\omega_2) + O(1) .
\end{align}
\end{proposition}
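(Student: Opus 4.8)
The plan is to read off both expansions from the integral representations (\ref{fint}) and (\ref{gint}), using the homogeneity of $F$ and $G$ in their parameters to rotate $\overline\omega_1,\widetilde\omega_1,\omega_2$ into the regions where those formulas apply (the device already used in the proofs of Proposition~\ref{reflection} and Proposition~\ref{asmp00}), and then extracting the $\omega_2\to\infty$ behaviour with Lemma~\ref{Brilemma} doing the arithmetic. For part (i) I would in practice prefer to argue from the product expansion of Proposition~\ref{F}(iii); for part (ii) I would bootstrap from part (i) via the difference relations (\ref{difG1})--(\ref{difG2}) and invoke the integral formula only for the finitely many residual constants.

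For (i): Proposition~\ref{F}(iii), together with its partner obtained by interchanging $\overline\omega_1$ and $\omega_2$ (legitimate because $F$ is symmetric in its two parameters), is valid for $\mathrm{Im}(\overline\omega_1/\omega_2)>0$ and for $\mathrm{Im}(\overline\omega_1/\omega_2)<0$ respectively, so between them they cover every closed subsector of $H$. In either case, as $\omega_2\to\infty$ the base $e^{\pm 2\pi i\omega_2/\overline\omega_1}$ of the $q$--Pochhammer factor built over $\overline\omega_1$ tends to $0$, so that factor tends to $1$ (resp.\ to the $O(1)$ constant $1-e^{2\pi i z/\overline\omega_1}$); what survives is a single $q$--Pochhammer $\prod_k(1-e^{2\pi i(z\pm k\overline\omega_1)/\omega_2})$ whose base $e^{2\pi i\overline\omega_1/\omega_2}$ and argument $e^{2\pi i z/\omega_2}$ both tend to $1$. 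In this confluent limit Euler--Maclaurin — equivalently, the residue bookkeeping used in the proof of Proposition~\ref{reflection} — produces, with $h=2\pi i/\omega_2$, a leading integral term $\frac{1}{h\,\overline\omega_1}\mathrm{Li}_2(e^{hz})$ and a boundary term $\tfrac12\log(1-e^{hz})$, everything else being $O(1)$. Expanding $\mathrm{Li}_2$ near $1$ turns $\frac{1}{h\,\overline\omega_1}\mathrm{Li}_2(e^{hz})$ into $\tfrac{\zeta(2)}{2\pi i}\tfrac{\omega_2}{\overline\omega_1}+\tfrac{z}{\overline\omega_1}\log\omega_2+O(1)=-\tfrac{\pi i}{12}\tfrac{\omega_2}{\overline\omega_1}+\tfrac{z}{\overline\omega_1}\log\omega_2+O(1)$, while the boundary term contributes $-\tfrac12\log\omega_2+O(1)$; adding, the coefficient of $\log\omega_2$ is $\tfrac{z}{\overline\omega_1}-\tfrac12=B_1(z/\overline\omega_1)$, which is the claim.

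For (ii): by its defining generating function, $B_{k,2}(z+\overline\omega_1\,|\,\omega_1,\widetilde\omega_1)$ is the coefficient of $s^{k-2}/k!$ in the Laurent expansion at $s=0$ of $e^{(z+\overline\omega_1)s}\big/\big((e^{\omega_1 s}-1)(e^{\widetilde\omega_1 s}-1)\big)$, which is precisely the factor multiplying $1/\big(s(e^{\omega_2 s}-1)\big)$ in (\ref{gint}); running the confluence analysis one level higher, the contributions that are not $O(1)$ as $\omega_2\to\infty$ come from $k=0,1,2$, of orders $\omega_2^2$, $\omega_2$ and $\log\omega_2$. Their coefficients are pinned down by Lemma~\ref{Brilemma} — whose proof goes through verbatim after the replacement of $(e^{\omega s}-1)^2$ by the unequal product $(e^{\omega_1 s}-1)(e^{\widetilde\omega_1 s}-1)$, by the same integration by parts — which evaluates the $\omega_2^2$ and $\omega_2$ coefficients through $\zeta(3)$ and $\zeta(2)$, the $\log\omega_2$ coefficient emerging from the degenerate $d=1$ case; this reproduces the normalisations $\zeta(3)/4\pi^2$, $-\zeta(2)/2\pi i$ and $-1/2$ in (\ref{Gasympinfty}). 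A clean way to organise this, and a useful check, is to posit that $\log G$ has the shape (quadratic in $\omega_2$) $+$ (multiple of $\log\omega_2$) $+O(1)$ with coefficients polynomial in $z$: then the difference relations (\ref{difG1})--(\ref{difG2}) together with part (i) force every coefficient except three $z$--independent constants, and those three are exactly what the integral formula via Lemma~\ref{Brilemma} provides.

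The hard part will be the rigorous, locally uniform justification of the confluent limit. As $\omega_2\to\infty$ the poles $s=2\pi i m/\omega_2$ of $1/(e^{\omega_2 s}-1)$ all collapse onto the origin, so term--by--term integration of the Laurent expansions gives integrals that diverge at $s=\infty$ and must be interpreted by analytic continuation; one must check that the error is genuinely $O(1)$ uniformly on closed subsectors of $H$, and keep track of which form of the product expansion, or which rotation of the contour, is legitimate on which part of the sector — precisely the bookkeeping already carried out in the proofs of Propositions~\ref{reflection} and~\ref{asmp00}. Matching the $(2\pi i)^{-2}$ of Lemma~\ref{Brilemma} against the $1/4\pi^2$ in the statement is then only a sign check.
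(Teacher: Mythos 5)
Your proposal assembles the right ingredients (the integral representations (\ref{fint}), (\ref{gint}), the rotation/homogeneity device, the multiple Bernoulli generating series, and Lemma \ref{Brilemma}), and your part (i) via the product expansion of Proposition \ref{F}(iii) and the $\mathrm{Li}_2$ expansion is a legitimate alternative to the paper, which simply imports part (i) from \cite[Proposition 4.8]{Bri2} and extends its range of validity by a rotation. But in part (ii) you have misidentified the one step that actually makes the argument close. The paper never modifies Lemma \ref{Brilemma}, and the unequal parameters $\omega_1,\widetilde\omega_1$ never enter the lemma: instead one first differentiates $\log G$ with respect to $\omega_2$ under the integral sign in (\ref{gint}). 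This turns the factor $-1/\big(s(e^{\omega_2 s}-1)\big)$ into $e^{\omega_2 s}/(e^{\omega_2 s}-1)^2$, i.e.\ it manufactures exactly the kernel of Lemma \ref{Brilemma} in the single variable $\omega=\omega_2$; the remaining factor $e^{(z+\overline\omega_1)s}\big/\big((e^{\omega_1 s}-1)(e^{\widetilde\omega_1 s}-1)\big)$ is expanded into $\sum_k B_{k,2}(z+\overline\omega_1\,|\,\omega_1,\widetilde\omega_1)\,s^{k-2}/k!$, and the lemma is applied verbatim term by term with $d=3-k$. The terms $k=0,1,2$ give the $\omega_2$, constant, and $\omega_2^{-1}$ contributions to $\partial_{\omega_2}\log G$, which integrate back to the $\omega_2^2$, $\omega_2$, and $\log\omega_2$ terms of (\ref{Gasympinfty}); all higher $k$ integrate to $O(1)$. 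Your proposed ``replacement of $(e^{\omega s}-1)^2$ by the unequal product'' is therefore aimed at the wrong integral, and the confluence difficulty you flag at the end (poles of $1/(e^{\omega_2 s}-1)$ collapsing onto the origin, termwise integrals requiring analytic continuation) is precisely the difficulty that the differentiate-then-integrate device is designed to bypass. As written, part (ii) does not close.

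Your alternative organisation of (ii) --- positing the shape of the expansion and pinning down the $z$-dependence of the coefficients through the difference relations (\ref{difG1})--(\ref{difG2}) and part (i) --- is a sound consistency check and correctly reproduces the $B_{k,2}$ polynomials up to three additive constants, but it presupposes the existence of an asymptotic expansion of that shape, which is the analytic content to be proved; so it cannot replace the integral computation. With the differentiation step inserted, the rest of your outline (rotation of $z$ to near the positive imaginary axis so that the hypotheses $\mathrm{Re}(\omega_1),\mathrm{Re}(\widetilde\omega_1),\mathrm{Re}(z+\overline\omega_1)>0$ of (\ref{gint}) hold, cancellation of the rotation dependence, and the sign check against $(2\pi i)^{-2}=-1/4\pi^2$) matches the paper's proof.
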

\begin{proof}
Part $(i)$ is proved in \cite[Proposition 4.8]{Bri2} with the assumption $\text{Re}(z)>0$ and $\omega_2 \to \infty$ in any closed subsector $\Sigma$ of the half-plane $\text{Re}(\omega_2)>0$. Under our current assumption $\text{Im}(z/\overline\omega_1)>0$ we can rotate $z$ to almost align with the positive imaginary line and have a small real part. All the rotation dependence cancels out in the end of computation and $\omega_2$ is required to lie in a closed subsector $\Sigma$ of the half-plane $H=\{a\in\IC^*\,|\,\text{Im}(z/a)>0\}$.

For part $(ii)$ we again apply a rotation such that $z$ almost aligns along the positive imaginary line and has a small real part. It can be easily checked that the rotation dependence cancels in the end. Therefore without loss of generality, we may assume $\text{Re}(\omega_1)>0$, $\text{Re}(\widetilde\omega_1)>0$, $\text{Re}(z+\overline\omega_1)>0$. 
Applying the integral formula (\ref{gint}) we have 
\begin{equation}
	\log G(z\,|\,\omega_1,\widetilde{\omega}_1,\omega_2) = \int_C \frac{-e^{(z+\overline\omega_1)s}}{(e^{\omega_1 s}-1) (e^{\widetilde\omega_1 s}-1)(e^{\omega_2 s}-1)} \cdot \frac{ds}{s}, 
\end{equation} with the valid region
$0<\text{Re}(z+\overline\omega_1)<\text{Re}(\omega_1+\widetilde\omega_1+\omega_2)$, which becomes $0<\text{Re}(z+\overline\omega_1)$ in the $\omega_2 \to \infty$ limit. 

By differentiating with respect to $\omega_2$ we have
\begin{align}\label{Gprime}
	\frac{\partial}{\partial \omega_2}\log & G(z\,|\,\omega_1,\widetilde{\omega}_1,\omega_2) = \int_C \frac{e^{(z+\overline\omega_1)s}}{(e^{\omega_1 s}-1) (e^{\widetilde\omega_1 s}-1)} \frac{e^{\omega_2 s}}{(e^{\omega_2 s}-1)^2} \cdot ds, \nonumber \\
	\sim & \sum_{k=0}^{\infty} \frac{B_{k,2}(z+\overline\omega_1\,|\,\omega_1,\widetilde\omega_1)}{k!} \int_C s^{k-2} \cdot \frac{e^{\omega_2 s}}{(e^{\omega_2 s}-1)^2} \cdot ds \nonumber \\
	= & \sum_{k=0}^{\infty} \frac{B_{k,2}(z+\overline\omega_1\,|\,\omega_1,\widetilde\omega_1)}{k!} \frac{(k-2)\zeta(3-k)}{2 \pi i} \cdot \big(\frac{\omega_2}{2\pi i}\big)^{1-k}\qquad \text{(by Lemma \ref{Brilemma})}\nonumber \\
	= & -\frac{\zeta(3) \omega_2}{2 \pi^2 \omega_1 \widetilde\omega_1 } -B_{1,2}(z+\overline\omega_1\,|\,\omega_1,\widetilde\omega_1) \frac{\zeta(2)}{2 \pi i} - B_{2,2}(z+\overline\omega_1\,|\,\omega_1,\widetilde\omega_1) \frac{1}{2\omega_2} + \cdots. 
\end{align} 

After integrating (\ref{Gprime}) with respect to $\omega_2$ we obtain (\ref{Gasympinfty}). Also recall that $\zeta(2)=\pi^2/6$.

\end{proof}
\subsection{A solution}
Under the assumption 
\begin{align*}
	& \text{Im}(z/\omega_1)>0, \text{Im}(z/\widetilde\omega_1)>0,  \\
	&\text{Im}(\Delta\omega_1/\omega_1)>0, \text{Im}(\Delta\omega_1/\widetilde\omega_1)>0, 
\end{align*} we define
\begin{align}
	F^*(z\,|\,\overline\omega_1,\omega_2)=&F(z\,|\,\overline\omega_1,\omega_2)\cdot\text{exp}\big(Q_F(z\,|\,\overline\omega_1,\omega_2)\big),\nonumber \\
	G^*(z\,|\,\omega_1,\widetilde\omega_1,\omega_2) =& \frac{G(z\,|\,\omega_1,\widetilde\omega_1,\omega_2)}{G\big(\Delta\omega_1\,|\,\omega_1,\widetilde\omega_1,\omega_2\big)} \cdot \text{exp}\big(Q_G(z\,|\,\omega_1,\widetilde\omega_1,\omega_2)\big),
\end{align} where $Q_F$ and $Q_G$ are Laurent polynomials in $\omega_2$ defined by
\begin{align}
Q_F(z\,|\,\overline\omega_1,\omega_2) =& - f_{-2}^c(z,\overline\omega_1)\cdot\frac{1}{\omega_2} +\frac{f_{-1}^c(z,\overline\omega_1)}{2} + \frac{\pi i}{12} \cdot \frac{\omega_2}{\overline\omega_1}, \nonumber  \\
Q_G(z\,|\,\omega_1,\widetilde\omega_1,\omega_2) =& -\big( g_{-2}^c(z,\omega_1,\widetilde\omega_1)-g_{-2}^c(\Delta\omega_1,\omega_1,\widetilde\omega_1)\big)\cdot\frac{1}{\omega_2} +\frac{1}{2} \big( g_{-1}^c(z,\omega_1,\widetilde\omega_1)-g_{-1}^c(\Delta\omega_1,\omega_1,\widetilde\omega_1)\big) \nonumber \\
&+\big(B_{1,2}(z+\overline\omega_1\,|\,\omega_1,\widetilde\omega_1)-B_{1,2}(\omega_1\,|\,\omega_1,\widetilde\omega_1)\big)\cdot\frac{\zeta(2)\omega_2}{2\pi i}.
\end{align}

\begin{remark}
	Since the definition of $Q_G(z\,|\,\omega_1,\widetilde\omega_1,\omega_2)$ involves $g_{-2}^c(\Delta\omega_1,\omega_1,\widetilde\omega_1)$ and $g_{-1}^c(\Delta\omega_1,\omega_1,\widetilde\omega_1)$, we need to require $\text{Im}(\Delta\omega_1/\omega_1)>0$, and  $\text{Im}(\Delta\omega_1/\widetilde\omega_1)>0$ to ensure their convergences.
\end{remark}

\begin{proposition}\label{FstarGstar}
(i) Let $z\in\IC$ and $\omega_1$, $\widetilde\omega_1$, $\omega_2$ lie on the same side of some straight line through the origin. Also assume $\text{Im}(z/\omega_1)>0, \text{Im}(z/\widetilde\omega_1)>0, \text{Im}(\Delta\omega_1/\omega_1)>0, \text{Im}(\Delta\omega_1/\widetilde\omega_1)>0$.

We have the following difference relations:
\begin{align}
	\label{diffF}\frac{F^{*}(z+\overline\omega_1\,|\,\overline\omega_1,\omega_2)}{F^{*}(z\,|\,\overline\omega_1,\omega_2)} =& \frac{1}{1-x_2}, \\
	\label{diffG1}\frac{G^{*}(z+\omega_1\,|\,\omega_1,\widetilde\omega_1,\omega_2)}{G^{*}(z\,|\,\omega_1,\widetilde\omega_1,\omega_2)} =& \frac{1}{F^{*}(z+\overline\omega_1\,|\,\widetilde\omega_1,\omega_2)},  \qquad \text{if further assume $\text{Im}(\omega_1/\widetilde\omega_1)>0$,}\\
	\label{diffG2}\frac{G^{*}(z+\widetilde\omega_1\,|\,\omega_1,\widetilde\omega_1,\omega_2)}{G^{*}(z\,|\,\omega_1,\widetilde\omega_1,\omega_2)} =& \frac{1}{F^{*}(z+\overline\omega_1\,|\,\omega_1,\omega_2)}, \qquad \text{if further assume $\text{Im}(\widetilde\omega_1/\omega_1)>0$.}
\end{align}

(ii) When $\text{Im}(\omega_1/\omega_2)>0$, $\text{Im}(\widetilde\omega_1/\omega_2)>0$ we have the reflection relations
\begin{equation}\label{FF2}
	F^*(z\,|\,\overline\omega_1,\omega_2)\cdot F^*(z\,|\,\overline\omega_1,-\omega_2)=  \prod_{k\geq 0} \big(1-x_2 (q_2 \widetilde{q}_2)^{k/2}\big)\cdot \prod_{k\geq 1} \big(1-x_2^{-1} (q_2 \widetilde{q}_2)^{k/2}\big)^{-1},\end{equation}
\begin{equation}\label{GG2}
	G^*(z\,|\,\omega_1,\widetilde\omega_1,\omega_2)\cdot G^*(z\,|\,\omega_1,\widetilde\omega_1,-\omega_2)= \prod_{k_1\geq 0, k_2\geq 0} \big(1-x_2 q_2^{k_1+\frac{1}{2}}\widetilde{q}_2^{k_2+\frac{1}{2}}\big)\cdot\prod_{k_1\geq 0, k_2\geq 0} \big(1-x_2^{-1} q_2^{k_1+\frac{1}{2}}\widetilde{q}_2^{k_2+\frac{1}{2}}\big).\end{equation}
\end{proposition}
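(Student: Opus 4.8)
The plan is to bootstrap everything from the difference relations of Propositions~\ref{F}, \ref{G} and the reflection relations of Proposition~\ref{reflection}, and to treat the correction exponentials $\exp(Q_F)$, $\exp(Q_G)$ (and, for $G^*$, the normalization by $G(\Delta\omega_1\mid\omega_1,\widetilde\omega_1,\omega_2)$) as a matching problem: since $Q_F$, $Q_G$ are Laurent polynomials in $\omega_2$, every claimed identity reduces to comparing the coefficients of $\omega_2^{-1}$, $\omega_2^{0}$ and $\omega_2^{1}$.

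For part (i): from the definitions and (\ref{difF}) the left side of (\ref{diffF}) equals $(1-x_2)^{-1}\exp\!\big(Q_F(z+\overline\omega_1\mid\overline\omega_1,\omega_2)-Q_F(z\mid\overline\omega_1,\omega_2)\big)$, so (\ref{diffF}) is equivalent to $Q_F$ being $\overline\omega_1$-periodic in $z$; using (\ref{difG1}), (\ref{difG2}) and the cancellation of the common factor $G(\Delta\omega_1\mid\cdots)$, (\ref{diffG1}) and (\ref{diffG2}) are equivalent to $Q_G(z+\omega_1\mid\cdots)-Q_G(z\mid\cdots)=-Q_F(z+\overline\omega_1\mid\widetilde\omega_1,\omega_2)$ and $Q_G(z+\widetilde\omega_1\mid\cdots)-Q_G(z\mid\cdots)=-Q_F(z+\overline\omega_1\mid\omega_1,\omega_2)$. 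The $\omega_2^{-1}$ and $\omega_2^{0}$ coefficients involve only $f^c_{-2},f^c_{-1},g^c_{-2},g^c_{-1}$, and the needed relations, namely $f^c_{k-2}(z+\overline\omega_1,\overline\omega_1)=f^c_{k-2}(z,\overline\omega_1)$ and $g^c_{k-2}(z+\omega_1,\omega_1,\widetilde\omega_1)-g^c_{k-2}(z,\omega_1,\widetilde\omega_1)=-f^c_{k-2}(z+\overline\omega_1,\widetilde\omega_1)$ (and its $\widetilde\omega_1$-analogue) for $k=0,1$, follow directly from (\ref{fc}), (\ref{gc}): for the first, the difference is $\int_{c\cdot C}e^{zs}s^{k-2}\,ds$, which vanishes for $k=0,1$ because the integrand's only pole is at $s=0$, excluded by the detour, and the contour closes at infinity; for the second, cancelling $e^{\omega_1 s}-1$ between numerator and denominator leaves exactly $-f^c_{k-2}(z+\overline\omega_1,\widetilde\omega_1)$. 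Here one uses that $f^c_{k-2},g^c_{k-2}$ are independent of the admissible choice of the rotation $c$, so that shifting the first argument is harmless. Finally the $\omega_2^{1}$ coefficients are pure multiple-Bernoulli terms, and the matching comes down to $B_{1,2}(w+\omega_1\mid\omega_1,\widetilde\omega_1)-B_{1,2}(w\mid\omega_1,\widetilde\omega_1)=1/\widetilde\omega_1$ (and the symmetric identity) together with $\zeta(2)/(2\pi i)=-\pi i/12$, which exactly account for the $\tfrac{\pi i}{12}\omega_2/\overline\omega_1$ terms in $Q_F$.

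For part (ii): replacing $\omega_2$ by $-\omega_2$ in $Q_F$ flips the sign of its terms odd in $\omega_2$, so $Q_F(z\mid\overline\omega_1,\omega_2)+Q_F(z\mid\overline\omega_1,-\omega_2)=f^c_{-1}(z,\overline\omega_1)$ and hence $F^*(z\mid\overline\omega_1,\omega_2)\,F^*(z\mid\overline\omega_1,-\omega_2)=F(z\mid\overline\omega_1,\omega_2)\,F(z\mid\overline\omega_1,-\omega_2)\,e^{f^c_{-1}(z,\overline\omega_1)}$. Evaluating $f^c_{-1}(z,\overline\omega_1)=\int_{c\cdot C}e^{zs}\big(s(e^{\overline\omega_1 s}-1)\big)^{-1}\,ds$ by closing the contour and summing residues at $s=2\pi i n/\overline\omega_1$ gives $e^{f^c_{-1}(z,\overline\omega_1)}=(1-x_1)^{-1}$; since (\ref{difF}) also yields $F(z\mid\overline\omega_1,\omega_2)=(1-x_1)\,F(z+\omega_2\mid\overline\omega_1,\omega_2)$, this identifies $F^*(z\mid\overline\omega_1,\omega_2)F^*(z\mid\overline\omega_1,-\omega_2)$ with $F(z+\omega_2\mid\overline\omega_1,\omega_2)F(z\mid\overline\omega_1,-\omega_2)$, which is the right side of (\ref{FF1}); hence (\ref{FF2}). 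The $G^*$ case runs in parallel: the odd-in-$\omega_2$ parts of $Q_G$ cancel, leaving $Q_G(z\mid\cdots,\omega_2)+Q_G(z\mid\cdots,-\omega_2)=g^c_{-1}(z,\omega_1,\widetilde\omega_1)-g^c_{-1}(\Delta\omega_1,\omega_1,\widetilde\omega_1)$, so $G^*(z\mid\cdots,\omega_2)G^*(z\mid\cdots,-\omega_2)$ is $G(z\mid\cdots,\omega_2)G(z\mid\cdots,-\omega_2)$ divided by $G(\Delta\omega_1\mid\cdots,\omega_2)G(\Delta\omega_1\mid\cdots,-\omega_2)$, times $e^{g^c_{-1}(z,\omega_1,\widetilde\omega_1)-g^c_{-1}(\Delta\omega_1,\omega_1,\widetilde\omega_1)}$. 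One then identifies $g^c_{-1}(\cdot,\omega_1,\widetilde\omega_1)$ with $-\log F(\cdot+\overline\omega_1\mid\omega_1,\widetilde\omega_1)$ up to the residues crossed when rotating $c\cdot C$ onto the standard contour, uses the $\omega_2$-shift difference relation $G(z+\omega_2\mid\omega_1,\widetilde\omega_1,\omega_2)/G(z\mid\omega_1,\widetilde\omega_1,\omega_2)=F(z+\overline\omega_1\mid\omega_1,\widetilde\omega_1)^{-1}$ (proved exactly as (\ref{difG1})--(\ref{difG2}), from the $\text{sin}_3$ difference relation and $B_{3,3}(w+\omega_2\mid\omega_1,\widetilde\omega_1,\omega_2)-B_{3,3}(w\mid\omega_1,\widetilde\omega_1,\omega_2)=3B_{2,2}(w\mid\omega_1,\widetilde\omega_1)$), and invokes (\ref{GG1}); once the $F(z+\overline\omega_1\mid\omega_1,\widetilde\omega_1)$-type factors and the normalization $G(\Delta\omega_1\mid\cdots,\pm\omega_2)$ cancel against the residue contributions, (\ref{GG2}) follows.

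The step I expect to be the real obstacle is the $G^*$ case of part (ii): one has to keep track of the residues produced both by the $z$-dependent rotation $c$ in the integral defining $g^c_{-1}$ and by the $(-\omega_2)$-rotation in $G(\cdot\mid\cdots,-\omega_2)$, and check that they combine with the normalization $G(\Delta\omega_1\mid\cdots,\pm\omega_2)$ and the $\Delta\omega_1$-dependent pieces of $Q_G$ to leave precisely the double product in (\ref{GG2}); everything in part (i) and the $F^*$ half of (ii) is routine once the coefficient and Bernoulli identities are in hand. A cleaner route for (ii) that sidesteps much of this bookkeeping would be to rerun the proof of Proposition~\ref{reflection} directly for $F^*$ and $G^*$: write each factor via its integral representation (rotating the contour for the $-\omega_2$ factor), add in the correction integrals that make up $Q_F$, $Q_G$, and evaluate at once all residues at $s=2\pi i m/\omega_2$; the corrections $Q_F$, $Q_G$ are designed precisely so that the spurious residues at $s=2\pi i n/\overline\omega_1$ (respectively at $2\pi i n/\omega_1$ and $2\pi i n/\widetilde\omega_1$) and at $s=0$ cancel, leaving the stated products.
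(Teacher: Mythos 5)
Your strategy is the same as the paper's: part (i) is reduced to the prefactor identities $Q_F(z+\overline\omega_1\,|\,\overline\omega_1,\omega_2)=Q_F(z\,|\,\overline\omega_1,\omega_2)$ and $Q_G(z+\omega_1\,|\,\cdots)-Q_G(z\,|\,\cdots)=-Q_F(z+\overline\omega_1\,|\,\widetilde\omega_1,\omega_2)$ (plus its twin), checked coefficient by coefficient in $\omega_2$, and part (ii) is reduced to Proposition \ref{reflection} via $Q_F(\cdots,\omega_2)+Q_F(\cdots,-\omega_2)=f^c_{-1}$ together with $\log F(z+\omega_2\,|\,\overline\omega_1,\omega_2)-\log F(z\,|\,\overline\omega_1,\omega_2)=f^c_{-1}(z,\overline\omega_1)$. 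Your Bernoulli computations ($B_{1,2}(w+\omega_1\,|\,\omega_1,\widetilde\omega_1)-B_{1,2}(w\,|\,\omega_1,\widetilde\omega_1)=1/\widetilde\omega_1$, $\zeta(2)/2\pi i=-\pi i/12$) are correct, and your identification $e^{f^c_{-1}(z,\overline\omega_1)}=(1-x_1)^{-1}$ followed by (\ref{difF}) is equivalent to the paper's route for (\ref{FF2}). One caveat in part (i): the difference $f^c_{k-2}(z+\overline\omega_1,\overline\omega_1)-f^c_{k-2}(z,\overline\omega_1)=\int_{c\cdot C}e^{zs}s^{k-2}\,ds$ is a divergent integral (once the factor $e^{\overline\omega_1 s}-1$ is cancelled there is no exponential decay at the end of the contour where $\mathrm{Re}(zcs)>0$), so "no enclosed poles" is not a valid evaluation; the periodicity has to be obtained by evaluating each $f^c_{k-2}$ in its own convergence strip as a function of $x_1=e^{2\pi i z/\overline\omega_1}$ and analytically continuing. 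The analogous $g^c$ identities are fine because a factor $e^{\widetilde\omega_1 s}-1$ survives in the denominator.

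The genuine gap is exactly at the step you flagged: you assert that the normalization $G(\Delta\omega_1\,|\,\cdots,\pm\omega_2)$ cancels against residue contributions, and it does not. Running your own computation to the end, $Q_G(z\,|\,\cdots,\omega_2)+Q_G(z\,|\,\cdots,-\omega_2)=g^c_{-1}(z,\omega_1,\widetilde\omega_1)-g^c_{-1}(\Delta\omega_1,\omega_1,\widetilde\omega_1)$ and $e^{g^c_{-1}(w,\omega_1,\widetilde\omega_1)}=G(w+\omega_2\,|\,\cdots,\omega_2)/G(w\,|\,\cdots,\omega_2)$ give
\begin{equation*}
G^*(z\,|\,\cdots,\omega_2)\,G^*(z\,|\,\cdots,-\omega_2)=\frac{G(z+\omega_2\,|\,\cdots,\omega_2)\,G(z\,|\,\cdots,-\omega_2)}{G(\Delta\omega_1+\omega_2\,|\,\cdots,\omega_2)\,G(\Delta\omega_1\,|\,\cdots,-\omega_2)},
\end{equation*}
and applying (\ref{GG1}) to the denominator with $e^{2\pi i\Delta\omega_1/\omega_2}=(q_2/\widetilde q_2)^{1/2}$ yields the nontrivial factor $\prod_{k_1,k_2\geq 0}(1-q_2^{k_1+1}\widetilde q_2^{k_2})(1-q_2^{k_1}\widetilde q_2^{k_2+1})$, which is not $1$ (its $k_1=k_2=0$ term is already $(1-q_2)(1-\widetilde q_2)$). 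So your argument establishes (\ref{GG2}) only up to this $z$-independent double product. Under the conifold specialization $q_2=yq^{1/2}$, $\widetilde q_2=yq^{-1/2}$ this factor is precisely the third product appearing in Problem \ref{qRHconifold1}(iv), which strongly suggests it genuinely belongs on the right-hand side of (\ref{GG2}) rather than being something the residue bookkeeping can eliminate; the correct conclusion of the delicate step is that the extra factor survives, not that it cancels.
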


\begin{proof}
	(\ref{diffF}),(\ref{diffG1}), and (\ref{diffG2}) follows from the difference relations (\ref{difF}),(\ref{difG1}), (\ref{difG2}), and the following identities of the exponential prefactors,
	\begin{align}
		Q_F(z+\overline\omega_1\,|\,\overline\omega_1,\omega_2) =& Q_F(z\,|\,\overline\omega_1,\omega_2), \\
		Q_G(z+\omega_1\,|\,\omega_1,\widetilde\omega_1,\omega_2)-Q_G(z\,|\,\omega_1,\widetilde\omega_1,\omega_2)=& -Q_F(z+\overline\omega_1\,|\,\widetilde\omega_1,\omega_2), \\
		Q_G(z+\widetilde\omega_1\,|\,\omega_1,\widetilde\omega_1,\omega_2)-Q_G(z\,|\,\omega_1,\widetilde\omega_1,\omega_2)=& -Q_F(z+\overline\omega_1\,|\,\omega_1,\omega_2).
	\end{align} The conditions $\text{Im}(\omega_1/\widetilde\omega_1)>0$ for (\ref{diffG1}) and $\text{Im}(\widetilde\omega_1/\omega_1)>0$ for (\ref{diffG2}) come from the convergence regions of $f_{k-2}^c$ and $g_{k-2}^c$ in (\ref{fc})(\ref{gc}).
	
	 The reflection relations in part $(ii)$ come from (\ref{FF1}) and (\ref{GG1}) and the following relations
	 \begin{align}
	 	& \log F(z+\omega_2\,|\,\overline\omega_1,\omega_2) - \log F(z\,|\,\overline\omega_1,\omega_2) = f_{-1}^c(z,\overline\omega_1), \\
		& \log G(z+\omega_2\,|\,\omega_1,\widetilde\omega_1,\omega_2)- \log G(z+\omega_2\,|\,\omega_1,\widetilde\omega_1,\omega_2) = g_{-1}^c(z,\omega_1,\omega_2),
	 \end{align} which can be easily checked by the integral formulas.
	
\end{proof}

We define the following functions.
\begin{align}
	B_0(v,w,t,q^{\frac{1}{2}})= & B_0(v,w,t)=F^*(v\,|\,w,-t), \label{B0} \\
	D_0(v,w,t,q^{\frac{1}{2}})= & G^*(v\,|\,w-t\tau/2,w+t\tau/2,-t), \label{D0} \\
	B_n(v,w,t,q^{\frac{1}{2}})= & B_0(v+nw,w,t), \\
	D_n(v,w,t,q^{\frac{1}{2}})= & D_0(v+nw-nt\tau/2,w,t,q^{\frac{1}{2}}) \cdot \prod_{k=0}^{n-1} B_0(v+nw+(1-n+2k)t\tau/2,w+t\tau/2,t) , \label{Dn} \\
    & q^{\frac{1}{2}}=\text{exp}(\pi i \tau).
\end{align} 

\begin{remark}
	For $B_0(v,w,t,q^{\frac{1}{2}})$ to be well-defined, one needs to impose
	\begin{equation}\label{Bcondition}
		\text{Im}\big(\frac{v}{w}\big)>0, \qquad \text{Im}\big(\frac{v}{-t}\big)>0.
	\end{equation}
	
	For $D_0(v,w,t,q^{\frac{1}{2}})$ to be well-defined, one needs to impose
	\begin{align}\label{Dcondition1}
		\text{Im}\big(\frac{v}{w-t\tau/2}\big)>0, \qquad \text{Im}\big(\frac{v}{w+t\tau/2}\big)>0, \qquad \text{Im}\big(\frac{v}{-t}\big)>0, \nonumber \\
		\text{Im}\big(\frac{-t\tau/2}{w-t\tau/2}\big)>0 , \qquad \text{Im}\big(\frac{-t\tau/2}{w+t\tau/2}\big)>0, \qquad \text{Im}\big(\frac{-t\tau/2}{-t}\big)=\text{Im}(\tau/2)>0 . 
	\end{align}
It is clear that given $v,w,-t \in \IC^*$ satisfying (\ref{Bcondition}) there is an open set in the upper half-plane for $\tau$ to choose from such that conditions (\ref{Dcondition1}) hold. 

We describe this open set. (\ref{Bcondition}) implies that $w$ and $-t$ lie in the same half plane determined by $v$. Let $\tau = \rho e^{i \zeta}$ and take $\rho$ to be sufficiently small such that the denominators $w \pm t \tau /2$ in (\ref{Dcondition1}) 
are almost $w$. Then we have $\text{arg}(-w/t)< \zeta< \text{arg}(-w/t) +\pi$ and $0< \zeta<\pi$ from the second line of (\ref{Dcondition1}). Notice that the closure of this open set contains $\tau=0$. 

The open set arising from the well-definedness consideration of 
$B_n(v,w,t,q^{\frac{1}{2}})$ and $D_n(v,w,t,q^{\frac{1}{2}})$ is denoted by $\mathcal{H}(v,w,t,n) \subset \mathcal{H}$.
\end{remark}

\begin{proposition}\label{BDwallcrossingProp}
	Let $(v,w) \in M_+$ and $t\in\IC^*$ satisfying $\text{Im}\big(\frac{v}{-t}\big)>0$. Then for $\tau \in \mathcal{H}(v,w,t,n) \cap \mathcal{H}(v,w,t,n+1) \subset \mathcal{H}$ such that $B_n(v,w,t,q^{\frac{1}{2}})$, $B_{n+1}(v,w,t,q^{\frac{1}{2}})$, $D_n(v,w,t,q^{\frac{1}{2}})$, and $D_{n+1}(v,w,t,q^{\frac{1}{2}})$ are all well-defined, we have
	\begin{align}
& \label{Bwallcrossing} B_{n+1}(v,w,t,q^{\frac{1}{2}})=B_{n}(v,w,t,q^{\frac{1}{2}})\cdot (1-x y^{n} )^{-1},\\ 
& \label{Dwallcrossing} D_{n+1}(v,w,t,q^{\frac{1}{2}})=D_{n}(v,w,t,q^{\frac{1}{2}}) \cdot 
\prod_{k=0}^{n-1} (1-(q^{\frac{1}{2}})^{1-n+2k}x y^{n})^{-1}, \\
& x=\exp(-2\pi iv/ t), \qquad y=\exp(- 2\pi i w/t). \nonumber 
	\end{align}
\end{proposition}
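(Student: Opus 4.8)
The plan is to deduce both relations directly from the difference equations for $F^{*}$ and $G^{*}$ proved in Proposition~\ref{FstarGstar}(i), together with the explicit formulas \eqref{B0}--\eqref{Dn}. For $B_n$ the argument is immediate: since $B_{n+1}(v,w,t,q^{\frac{1}{2}})=F^{*}(v+(n+1)w\,|\,w,-t)$ and $B_{n}(v,w,t,q^{\frac{1}{2}})=F^{*}(v+nw\,|\,w,-t)$ share the same periods $(w,-t)$ and their first arguments differ by exactly the first period $w$, relation \eqref{diffF} with $z=v+nw$, $\overline\omega_1=w$, $\omega_2=-t$ gives $B_{n+1}/B_n=(1-x_2)^{-1}$ with $x_2=\text{exp}\big(2\pi i(v+nw)/(-t)\big)=xy^{n}$, which is \eqref{Bwallcrossing}.

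For $D_n$ I proceed in three steps. I first compare the leading $G^{*}$-factors: by \eqref{D0}--\eqref{Dn} the first argument of the $G^{*}$ in $D_{n+1}$ exceeds that in $D_n$ by $w-\tfrac{t\tau}{2}=\omega_1$, where $\omega_1=w-\tfrac{t\tau}{2}$, $\widetilde\omega_1=w+\tfrac{t\tau}{2}$, $\overline\omega_1=w$, $\omega_2=-t$, so \eqref{diffG1} contributes to that quotient the factor $F^{*}\big(v+(n+1)w-\tfrac{nt\tau}{2}\,\big|\,w+\tfrac{t\tau}{2},-t\big)^{-1}$. I then observe that this is exactly the reciprocal of the $k=0$ term of the $B_0$-product occurring in $D_{n+1}$ (which also has periods $(w+\tfrac{t\tau}{2},-t)$ by \eqref{B0}), so the two cancel; after reindexing the surviving numerator product, $D_{n+1}/D_n$ becomes a product over $j=0,\dots,n-1$ of ratios $F^{*}\big(z_j+(w+\tfrac{t\tau}{2})\,\big|\,w+\tfrac{t\tau}{2},-t\big)\big/F^{*}\big(z_j\,\big|\,w+\tfrac{t\tau}{2},-t\big)$ with $z_j=v+nw+(1-n+2j)\tfrac{t\tau}{2}$, each a shift of the argument by the first period $w+\tfrac{t\tau}{2}$. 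Finally I apply \eqref{diffF} to each such ratio; since $\text{exp}(\pi i\tau)=q^{\frac{1}{2}}$, the $\tau$-dependent part of $z_j$ contributes a factor $(q^{\frac{1}{2}})^{n-1-2j}$, so the ratio equals $\big(1-xy^{n}(q^{\frac{1}{2}})^{n-1-2j}\big)^{-1}$, and the substitution $k=n-1-j$ (using $n-1-2(n-1-k)=1-n+2k$) turns the product into \eqref{Dwallcrossing}. For $n=0$ the product is empty and gives $D_1=D_0$, consistent with a direct computation from the definitions; for $n<0$ the same manipulation applies once one adopts the usual convention for products whose upper index is below the lower one, as is already implicit in \eqref{Dn}.

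It remains to verify that every function written down lies in its domain of definition — this holds because each $F^{*}$ and $G^{*}$ appearing is, up to the difference relations, one of the factors of $D_n$ or $D_{n+1}$, which are well-defined by hypothesis — and that the extra condition $\text{Im}(\omega_1/\widetilde\omega_1)>0$ required to invoke \eqref{diffG1} is satisfied. A short computation gives $\text{Im}(\omega_1/\widetilde\omega_1)=2\,\text{Im}\big(w\,\overline{t\tau/2}\big)\big/|w+t\tau/2|^{2}$, which has the same sign as $\text{Im}\big(\tfrac{-t\tau/2}{w+t\tau/2}\big)$, one of the conditions in \eqref{Dcondition} guaranteeing that $D_0$ — hence $D_n$ and $D_{n+1}$ — is well-defined under the hypotheses of the proposition. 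I expect the only real work to be the bookkeeping in the $D_n$ step: lining up the $G^{*}$-shift with the $k=0$ factor of the $B_0$-product, reindexing the surviving product, and confirming that the $\tau$-dependent shifts of the periods produce precisely the powers $(q^{\frac{1}{2}})^{1-n+2k}$ asserted in \eqref{Dwallcrossing}.
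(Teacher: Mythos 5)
Your proposal is correct and follows essentially the same route as the paper: both derive \eqref{Bwallcrossing} directly from \eqref{diffF}, and both obtain \eqref{Dwallcrossing} by writing out $D_{n+1}/D_n$ from \eqref{Dn} and applying the difference relations \eqref{diffF} and \eqref{diffG1}, with your version merely spelling out the cancellation of the $G^{*}$-shift against the $k=0$ factor, the reindexing, and the check that $\text{Im}(\omega_1/\widetilde\omega_1)>0$ follows from the well-definedness hypotheses. No substantive difference from the paper's argument.
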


\begin{proof}
	We prove (\ref{Dwallcrossing}) by direct computation. The proof of (\ref{Bwallcrossing}) is similar. 
	
	By the definition of $D_n(v,w,t,q^{\frac{1}{2}})$ (\ref{Dn}), we have
	
	\begin{align}
		\frac{D_{n+1}(v,w,t,q^{\frac{1}{2}})}{D_n(v,w,t,q^{\frac{1}{2}})} = & \frac{D_0\big(v+(n+1)w-(n+1)t\tau/2,w,t\big)\cdot B_0\big(v+(n+1)w-nt\tau/2,w+t\tau/2,t\big)}{D_0\big(v+nw-nt\tau/2,w,t\big)} \cdot \nonumber \\
		& \prod_{k=0}^{n-1} \frac{B_0\big(v+(n+1)w+(2-n+2k)t\tau/2,w+t\tau/2,t\big)}{B_0\big(v+nw+(1-n+2k)t\tau/2,w+t\tau/2,t\big)} . \label{Dnwallcrossing}
	\end{align} 

Using the definition of $B_0$ and $D_0$ (\ref{B0})(\ref{D0}) and the difference relations (\ref{diffF})(\ref{diffG1}), the equation (\ref{Dnwallcrossing}) becomes
\begin{equation}
	\frac{D_{n+1}(v,w,t,q^{\frac{1}{2}})}{D_n(v,w,t,q^{\frac{1}{2}})} = \prod_{k=0}^{n-1} (1-(q^{\frac{1}{2}})^{1-n+2k}x y^{n})^{-1}.
\end{equation}
\end{proof}

\begin{theorem}
	Fix $(v,w)\in M_+$. Then there is an open set $\mathcal{H}(v,w,t,n)$ in the upper half-plane $\mathcal{H}$ determined by $(v,w),t$ and $n$, such that $B_n(v,w,t,q^{\frac{1}{2}})$ and $D_n(v,w,t,q^{\frac{1}{2}})$ are holomorphic for $t \in \mathcal{H}_{\ell_n}$ and $\tau \in \mathcal{H}(v,w,t,n)$. Moreover they solve Problem \ref{qRHconifold1}, the quantum Riemann-Hilbert problem for the resolved conifold.
\end{theorem}

\begin{proof}
	It is proved in \cite[Theorem 5.2]{Bri2} that $B_0(v,w,t)$ is holomorphic in $\mathcal{V}(0)$. The proof actually only relies on the ordering of $v,w,-t$ in their half-plane. 
	It follows that $B_n(v,w,t)$ is holomorphic in $\mathcal{V}(n)$. 
	
	As for $D_n(v,w,t,q^{\frac{1}{2}})$, we restrict to the open set $\mathcal{H}(v,w,t,n)$ in the upper half-plane, determined by $(v,w),t$ and $n$, such that $D_n(v,w,t,q^{\frac{1}{2}})$ is well-defined. 
	The well-definedness conditions involve the ordering of arguments in the functions $B_0$ and $D_0$ appearing in the $D_n$. 
	
	For example, since there is a factor $D_0(v+nw-nt\tau/2,w,t,q^{\frac{1}{2}})$ appearing in $D_n(v,w,t,q^{\frac{1}{2}})$, we need to impose 
\begin{align}\label{Dcondition}
	\text{Im}\big(\frac{v+nw-nt\tau/2}{w-t\tau/2}\big)>0, \qquad \text{Im}\big(\frac{v+nw-nt\tau/2}{w+t\tau/2}\big)>0, \qquad \text{Im}\big(\frac{v+nw-nt\tau/2}{-t}\big)>0, \nonumber \\
	\text{Im}\big(\frac{-t\tau/2}{w-t\tau/2}\big)>0 , \qquad \text{Im}\big(\frac{-t\tau/2}{w+t\tau/2}\big)>0, \qquad \text{Im}\big(\frac{-t\tau/2}{-t}\big)=\text{Im}(\tau/2)>0 . 
\end{align}	There are similar conditions for the $B_0$ factors appearing in $D_n(v,w,t,q^{\frac{1}{2}})$.
	
	These ensure that the various factors $G(\Delta\omega_1\,|\,\omega',\omega'')$ in the denominator do not attain zero, by the Proposition \ref{G}$(i)$. Therefore part $(i)(ii)$ of Problem (\ref{qRHpconifold}) is satisfied.
	
	Part $(iii)$ of Problem (\ref{qRHpconifold}) is satisfied due to Proposition \ref{BDwallcrossingProp}. Part $(iv)$ of Problem (\ref{qRHpconifold}) follows from Proposition \ref{FstarGstar}(ii).
\end{proof}

Some remarks are in order before concluding this section. As the title of section suggests, we only prove a solution to the quantum Riemann-Hilbert problem exists
since the uniqueness part of the theorem is lacking. It is likely that our formulation of the quantum Riemann-Hilbert problem is not ultimate and still subject to 
improvement. For example, in order to have certain version of uniqueness theorem, one needs to define the equivalence classes of quantum deformations and the equivalence classes of the solutions.  

As already alluded in the introduction, our solution has the feature that the valid region of the quantum parameter $q^{\frac{1}{2}}=\exp(\pi i \tau)$ 
varies on the space of stability conditions and BPS $t$-plane. However in the next section we will see that the refined Chern-Simons theory, which is dual or equivalent to 
refined Donaldson-Thomas theory on resolved conifold, does not have this restriction on the parameter $\tau$.  Therefore it would be 
interesting to understand whether the restriction of $\tau$ is due to the limitation of our approach or it has any physical implication.

\section{Refined Chern-Simons theory and the non-perturbative completion}
Via the large $N$ duality in string theory the correspondence between $SU(N)$ Chern-Simons theory on $S^3$ and the topological string/Gromov-Witten theory on the resolved conifold was established \cite{GV}. 
The correspondence was later promoted to the one between the refined Chern-Simons theory and the refined topological string theory in \cite{AS}.

In \cite{KreMkr} the partition function of the refined Chern-Simons theory is given by 
\begin{equation}\label{CS}
Z(\overline\delta,\overline\mu,\beta)=\frac{\beta}{\sqrt{\overline\mu - \frac{1}{2}(1-\beta)}} \frac{\text{sin}_3\big( \frac{1}{2}(\sqrt{\beta}+
	\frac{1}{\sqrt{\beta}})+\overline\delta \overline\mu\,|\, \frac{1}{\sqrt{\beta}}, \sqrt{\beta},\overline\delta \big)}
	{\text{sin}_3\big( \sqrt{\beta}\,|\, \frac{1}{\sqrt{\beta}}, \sqrt{\beta},\overline\delta \big)},
\end{equation} where $\overline\delta$ is the effective coupling constant of the Chern-Simons theory, $\overline\mu$ is proportional to $N$, and $\beta$ is a quantity related to the so-called $\Omega$-background, introduced by Nekrasov \cite{Nek}. 

Under the parameter redefinition,
\begin{equation} 
g_s=2 \pi i / \overline\delta, \qquad Q=\exp(-2\pi i \overline\mu),
\end{equation} 
the partition function can be written as
\begin{equation}
	Z(\overline\delta,\overline\mu,\beta) = \exp(F^{P}\big(g_s,Q,\beta)+F^{NP}(g_s,Q,\beta)\big).
\end{equation} 

Here $F^P\big(g_s,Q,\beta)$ is the perturbative free energy of the refined topological string on the resolved conifold, containing the perturbative genus expansion in $g_s$ and constant map contributions, while $F^{NP}(g_s,Q,\beta)$ is the non-perturbative free energy, containing contributions which go like
\begin{equation}
	\exp\big(-\frac{2n\pi^2}{\sqrt{\beta}g_s} \big),\qquad \text{or} \qquad \exp\big(-\frac{2n\pi^2 \sqrt{\beta}}{g_s} \big).
\end{equation}

In other words, the partition function (\ref{CS}) can be regarded as the non-perturbative completion of refined topological string/Gromov-Witten theory on the resolved conifold. 

Ignoring the exponential prefactor $\exp\big(Q_G(z\,|\,\omega_1,\widetilde\omega_1,\omega_2)\big)$ in $D_0(v,w,t,q^{1/2})$ (\ref{D0}),
we have
\begin{equation}\label{D0sin3}
	D_0(v,w,t,q^{1/2}) \sim \frac{G(v\,|\,w-t\tau/2,w+t\tau/2,-t)}{G(-t\tau/2\,|\,w-t\tau/2,w+t\tau/2,-t)} \sim \frac{\text{sin}_3(v+w\,|\,w-t\tau/2,w+t\tau/2,-t)}
	{\text{sin}_3(w-t\tau/2\,|\,w-t\tau/2,w+t\tau/2,-t)},
\end{equation} where in the second step the prefactors of multiple Bernoulli polynomials are also omitted.

Comparing (\ref{CS}) and (\ref{D0sin3}) we arrive at the following parameter identifications,
\begin{equation}
	\overline\delta\overline\mu \longleftrightarrow v, \qquad 
	\sqrt{\beta} \longleftrightarrow w-t\tau/2, \qquad
	\frac{1}{\sqrt{\beta}} \longleftrightarrow w+t\tau/2, \qquad
	\overline\delta \longleftrightarrow -t.
\end{equation} The upshot of this analysis is that the solution to the quantum Riemann-Hilbert problems is reminiscent of the non-perturbative completion of the refined Donaldson-Thomas theory/Gromov-Witten theory, at least on the resolved conifold.
Therefore solving the quantum Riemann-Hilbert problems provides a possible non-perturbative definition for the refined Donaldson-Thomas theory. 
 
\bibliographystyle{alpha}

\end{document}